\newtheorem{thrm}{Theorem}
\newtheorem{cor}[thrm]{Corollary}
\newtheorem{defin}[thrm]{Definition}
\newtheorem{lem}[thrm]{Lemma}
\newtheorem*{thmA}{Theorem~A}
\newtheorem*{thmB}{Corollary~A}
\newtheorem*{thmC}{Corollary~B}
\newcommand{\Sym}{\mathop{\mathrm{Sym}}}
\def\Z{{\mathbb Z}}
\def\Aut{{\rm Aut}}
\def\la{{\langle}}
\def\ra{{\rangle}}
\def\cal{\mathcal}
\def\gcd{{\rm gcd}}
\def\fix{{\rm fix}}
\def\Stab{{\rm Stab}}
\def\tl{\,{\triangleleft}\,}
\def\mod{{\rm mod\ }}
\def\AGL{{\rm AGL}}
\def\GL{{\rm GL}}
\def\Alt{{\rm Alt}}
\begin{document}
\pagestyle{plain}

\title{CI-groups with respect to ternary
  relational structures: new examples}

\bigskip
\maketitle

\begin{center}
\begin{tabular}{c c}
{\sc Edward Dobson} & {\sc Pablo Spiga}\\
{\small Department of Mathematics and Statistics} & {\small Dipartimento di Matematica Pura e Applicata} \\
{\small Mississipi State University}   & {\small Universit\`a degli Studi di Milano-Bicocca} \\
{\small PO Drawer MA} &{\small Via Cozzi 53} \\
{\small Mississipi State, MS 39762 USA} & {\small 20126 Milano, Italy}\\
{\small\tt dobson@math.msstate.edu} & {\small\tt pablo.spiga@unimib.it}
\end{tabular}
\end{center}

\begin{abstract}
We find a sufficient condition to establish that certain abelian
groups are not CI-groups with respect to ternary relational
structures, and then show that the groups $\Z_3\times\Z_2^2$,
$\Z_7\times\Z_2^3$, and $\Z_5\times\Z_2^4$ satisfy this condition.  Then we completely determine  which groups $\Z_2^3\times\Z_p$, $p$ a prime, are CI-groups with respect to binary and ternary relational structures.  Finally, we show that $\Z_2^5$ is not a CI-group with respect to ternary relational structures.
\end{abstract}

\section{Introduction}\label{intro}
In recent years, there has been considerable interest in which
groups $G$ have the property that any two Cayley graphs of $G$ are
isomorphic if and only if they are isomorphic by a group
automorphism of $G$.  Such a group is a called a  CI-group with respect to
graphs, and this problem is often referred to as the Cayley
isomorphism problem.   The
interested reader is referred to \cite{Li2002} for a
survey on CI-groups
with respect to graphs.  Of course, the Cayley isomorphism problem
can and has been considered for other types of combinatorial
objects.  Perhaps the most significant such result is a well-known
theorem of P\'alfy \cite{Palfy1987} which states that a group $G$ of order
$n$ is a CI-group with respect to every class of combinatorial
objects if and only if $n = 4$ or $\gcd(n,\varphi(n)) = 1$, where
$\varphi$ is the Euler phi function.  In fact, in proving this result, P\'alfy
showed that if a group $G$ is not a CI-group with respect to some
class of combinatorial objects, then $G$ is not a CI-group with
respect to quaternary relational structures.  As much work has been
done on the case of binary relational structures (i.e., digraphs),
until recently there was a ``gap" in our knowledge of the Cayley
isomorphism problem for $k$-ary relational structures with
$k = 3$.  As additional motivation to study this problem, we remark
that  a group $G$ that is a CI-group
with respect to ternary relational structures is necessarily a
CI-group with respect to binary relational structures.

Although Babai~\cite{Babai1977} showed in~$1977$ that the dihedral group of order $2p$
is a CI-group with respect to ternary relational structures,
no additional work was done on this problem until the first author
considered the problem in $2003$
\cite{Dobson2003}. Indeed, in \cite{Dobson2003} a relatively short list
of groups is given and it is proved that every
CI-group with respect to ternary
relational structures lies in this list (although not every group in this list is necessarily a CI-group with respect to ternary relational structures). Additionally,
several groups in the list were shown to be CI-groups with respect
to ternary relational structures.  Recently, the second author \cite{Spiga2008} has
shown that two groups given in \cite{Dobson2003} are not
CI-groups with respect to ternary relational structures, namely
$\Z_3\ltimes Q_8$ and $\Z_3\times Q_8$.  In this paper, we give a
sufficient condition to ensure that certain abelian groups are
not CI-groups with respect to ternary relational structures (Theorem \ref{main}), and then
show that $\Z_3\times\Z_2^2$, $\Z_7\times\Z_2^3$,
and $\Z_5\times\Z_2^4$ satisfy this
condition in Corollary~\ref{coro1} (and so are not CI-groups with respect to ternary
relational structures).  We then show that $\Z_5\times \Z_2^3$ is a CI-group with respect to ternary relational structures.  As the first author has shown
\cite{Dobson2010a} that $\Z_2^3\times\Z_p$ is a CI-group with respect to
ternary relational structures provided that $p\ge 11$, we then have a complete determination of which groups $\Z_2^3\times\Z_p$, $p$ a prime, are CI-groups with respect to ternary relational structures.

\begin{thmA}
The group $\Z_2^3\times\Z_p$ is a CI-group with respect to color ternary relational structures if and only if $p\not = 3$ and $7$.
\end{thmA}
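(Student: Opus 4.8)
The plan is to treat the statement as an assembly of the positive and negative results at hand and to prove the two implications separately. For the ``only if'' direction I would show that $p=3$ and $p=7$ force $\Z_2^3\times\Z_p$ to fail the CI property. The key tool is the standard fact that a subgroup of a CI-group with respect to color ternary relational structures is again a CI-group; equivalently, non-CI-ness of a subgroup propagates up to the overgroup. When $p=7$ the group $\Z_2^3\times\Z_7$ is, after reordering factors, exactly $\Z_7\times\Z_2^3$, which is shown not to be a CI-group in Corollary~\ref{coro1}, so nothing more is needed. When $p=3$ I would observe that $\Z_2^2\times\Z_3$ embeds as a subgroup of $\Z_2^3\times\Z_3\cong\Z_3\times\Z_2^3$; since $\Z_3\times\Z_2^2$ is not a CI-group by Corollary~\ref{coro1}, the subgroup-closure property forces $\Z_2^3\times\Z_3$ to fail as well.

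For the ``if'' direction I would split into cases according to the prime $p\neq 3,7$. For $p\geq 11$ the conclusion is precisely the theorem of the first author in \cite{Dobson2010a}. For $p=5$ the group is $\Z_2^3\times\Z_5\cong\Z_5\times\Z_2^3$, which is established in this paper to be a CI-group with respect to ternary relational structures. Finally, for $p=2$ the group collapses to $\Z_2^4$, which is a CI-group with respect to color ternary relational structures by known results. Since every prime other than $3$ and $7$ falls into one of these three cases, the sufficiency direction is complete, and combining it with the necessity direction yields the stated equivalence.

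The real obstacle is not the bookkeeping above but the single genuinely new positive case $p=5$, namely verifying that $\Z_5\times\Z_2^3$ is a CI-group. That case requires a careful analysis of the overgroups of the regular representation inside $\Sym(40)$, controlling the Sylow $2$- and $5$-subgroups and ruling out the obstruction exploited by Theorem~\ref{main}; this is delicate precisely because the companion group $\Z_5\times\Z_2^4$ \emph{does} satisfy that obstruction by Corollary~\ref{coro1}, so the argument must pinpoint what fails when the $2$-rank drops from $4$ to $3$. A secondary, more routine point to nail down is the subgroup-closure lemma in the color ternary setting, so that the reduction used in the $p=3$ case is fully justified.
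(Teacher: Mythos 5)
Your overall architecture coincides with the paper's: the negative cases $p=3,7$ come from Corollary~\ref{coro1} (for $p=3$ via the subgroup $\Z_2^2\times\Z_3$ together with the standard fact that subgroups of CI-groups are CI-groups, which the paper uses implicitly and you rightly make explicit), the case $p\ge 11$ is quoted from \cite{Dobson2010a}, and $p=2$ reduces to $\Z_2^4$. One small correction on the easy cases: for $p=2$ the paper does not invoke a known result but verifies computationally (GAP/Magma) that $\Z_2^4$ is a CI-group with respect to color \emph{ternary} relational structures --- the citation to \cite{Li2002} in the introduction covers only the binary case --- so you must either supply a genuine reference or accept that a computation is needed there.

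The genuine gap is the case $p=5$, which you explicitly defer: it is the entire new mathematical content behind Theorem~A in this paper, and ``a careful analysis of the overgroups of the regular representation inside $\Sym(40)$'' describes the problem rather than solving it. The paper closes it as follows. The hypothesis $p\ge 11$ in \cite{Dobson2010a} is used only in Lemma~\ref{refine}, so it suffices to show that for every $\phi\in\Sym(H)$ with $H=\Z_2^3\times\Z_5$ one can conjugate $\phi^{-1}H_L\phi$ inside $K=\la H_L,\phi^{-1}H_L\phi\ra$ so that the resulting group admits a complete block system with $8$ blocks of size $5$. Lemma~\ref{57tool} (resting on Lemmas~\ref{dtlemma} and~\ref{altlem}) shows that either this is possible or $K$ admits a block system with blocks of size $8$ on which the block kernel acts as a primitive subgroup of $\AGL(3,2)$; in the latter case $5$ does not divide $|\AGL(3,2)|=1344$, so the Sylow $5$-subgroups of $K$ have order $5$ and Sylow conjugacy produces $\delta$ with $\delta^{-1}\phi^{-1}\la\rho\ra\phi\delta=\la\rho\ra$, whose orbits give the desired blocks (Corollary~\ref{5tool}). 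Your heuristic for why the $2$-rank matters is sound ($|\GL(3,2)|=168$, so $\Z_2^3$ admits no automorphism of order $5$ at all, let alone a fixed-point-free one), but merely ruling out the obstruction of Theorem~\ref{main} does not establish the CI property; without the block-system reduction above your argument does not close.
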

We will show that both $\Z_2^3\times\Z_3$ and $\Z_2^3\times\Z_7$ are CI-groups with respect to binary relational structures. As it is already known that $\Z_2^4$ is a CI-group with respect to binary relational structures~\cite{Li2002}, we have the following result.
\begin{thmB}
The group $\Z_2^3\times\Z_p$ is a CI-group with respect to color binary relational structures for all primes $p$.
\end{thmB}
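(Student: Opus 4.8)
The plan is to reduce Corollary~A to Theorem~A supplemented by a short list of exceptional primes. For every prime $p \notin \{2,3,7\}$, Theorem~A asserts that $\Z_2^3 \times \Z_p$ is a CI-group with respect to color ternary relational structures; since any CI-group with respect to ternary relational structures is automatically one with respect to binary relational structures (as recalled in the introduction), these groups are CI-groups with respect to color binary relational structures. When $p = 2$ the group is $\Z_2^3 \times \Z_2 = \Z_2^4$, which is a CI-group with respect to binary relational structures by \cite{Li2002}. The entire remaining content of the corollary therefore concentrates on the two primes $p = 3$ and $p = 7$, and the core of the argument is to establish directly that $\Z_2^3 \times \Z_3$ and $\Z_2^3 \times \Z_7$ are CI-groups with respect to color binary relational structures.

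For each of these groups $G$ of order $8p$, I would invoke Babai's criterion~\cite{Babai1977}: since the automorphism group of a color binary relational structure is exactly a $2$-closed permutation group, $G$ is a CI-group with respect to color binary relational structures if and only if, inside every $2$-closed transitive group $H$ on the $8p$ points that contains a regular copy of $G$, all regular subgroups isomorphic to $G$ are conjugate in $H$. The first step is to locate the regular copy of $G$ through its Sylow subgroups, the elementary abelian Sylow $2$-subgroup $\Z_2^3$ and the cyclic Sylow $p$-subgroup $\Z_p$, and to understand how they sit inside such an $H$. The decisive structural feature is that $3$ and $7$ are precisely the odd primes dividing $|\GL(3,2)| = 168$, so a Sylow $p$-subgroup can in principle act nontrivially on $\Z_2^3$; this is exactly the phenomenon that destroys the ternary CI-property in Theorem~A, and in the binary setting it must be shown to do no harm.

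The main obstacle is thus classifying the $2$-closed overgroups $H$ of degree $24$ and $56$ and verifying the conjugacy of regular subgroups in each. I expect to split into cases according to the $H$-invariant block systems induced by the subgroups of $G$. In the imprimitive cases — block systems of size $8$ (with $p$ blocks) or of size $p$ (with $8$ blocks) — one can hope to reduce to the CI-property of the smaller factors $\Z_2^3$, $\Z_p$, and $\Z_2^4$, all already known to be CI-groups with respect to binary structures, by means of a wreath- or Goursat-type decomposition of $H$ combined with the conjugacy of its Sylow subgroups. The genuinely delicate case is when $H$ contains a transitive section involving $\PSL(2,7) \cong \GL(3,2)$, where the extra $\GL(3,2)$-symmetry is present; here the task is to show that, at the level of $2$-closed groups, this symmetry produces no regular subgroup isomorphic to $G$ outside the conjugacy class of the standard one. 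Settling this primitive or near-primitive case, possibly aided by the explicit classification of transitive groups of degree $24$ and $56$, is where I anticipate the bulk of the technical work to lie.
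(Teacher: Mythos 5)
Your reduction is correct and matches the paper's first step exactly: Theorem~A disposes of all primes $p\notin\{2,3,7\}$ via the ternary-implies-binary implication, $p=2$ is the known case $\Z_2^4$, and everything hinges on $\Z_2^3\times\Z_3$ and $\Z_2^3\times\Z_7$. Your appeal to Babai's criterion (Lemma~\ref{CItool}) and the identification of $3$ and $7$ as the odd primes dividing $|\GL(3,2)|$ as the source of the difficulty are also the right framing.

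The gap is that the argument stops precisely where the content of the corollary lies: the two exceptional primes are only sketched, and the sketch as written would not go through without the ingredients the paper actually supplies. For $p=3$ the paper simply checks the transitive groups of degree $24$ by computer; you would need to either do the same or produce a hand argument you have not given. For $p=7$, your proposed ``wreath- or Goursat-type decomposition reducing to the CI-property of the smaller factors'' is not how the imprimitive cases are resolved, and would fail as stated: being a CI-group is not inherited by extensions through block systems, and the blocks-of-size-$7$ case is handled in the paper not by a reduction to $\Z_7$ and $\Z_2^3$ separately but by quoting the $3$-closure conjugacy result of \cite{Dobson2010a} (which applies a fortiori to the $2$-closure). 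The genuinely hard case you flag --- when a section isomorphic to $\GL(3,2)$ acts --- is resolved in the paper by Lemma~\ref{57tool} (either one can conjugate to get blocks of size $7$, or there are $7$ blocks of size $8$ on which $\fix_K({\cal C})$ induces a primitive subgroup of $\AGL(3,2)$), followed by a Sylow argument when $7\nmid|\fix_K({\cal C})|$, and otherwise by showing that double transitivity on the blocks forces the $2$-closure to be a wreath product $\Aut(\Gamma_1)\wr\Sym(8)$, where conjugacy of the regular subgroups is supplied by \cite[Corollary~6.8]{DobsonM2009}. None of these steps is routine, and your plan does not identify them; in particular ``the explicit classification of transitive groups of degree $56$'' is not what the paper uses and is not readily available the way degree $24$ is. As it stands, the proposal is a correct reduction plus a research plan, not a proof.
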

We are then left in the situation of knowing whether or not any subgroup of $\Z_2^3\times\Z_p$ is a CI-group with respect to binary or ternary relational structures, with the exception of $\Z_2^2\times\Z_7$ with respect to ternary relational structures (as $\Z_2^2\times\Z_7$ is a CI-group with respect to binary relational structures \cite{KovacsM2009}).  We show that $\Z_2^2\times\Z_7$ is a CI-group with respect to ternary relational structures (which generalizes a special case of the main result of \cite{KovacsM2009}) and we prove the following.

\begin{thmC}
The group $\Z_2^2\times\Z_p$ is a CI-group with respect to color ternary relational structures if and only if $p\neq 3$.
\end{thmC}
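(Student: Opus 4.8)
The plan is to prove both directions by a case analysis on the prime $p$, handling every prime except $p=7$ with results already in hand and concentrating the real work on $\Z_2^2\times\Z_7$. For the ``only if'' direction, note that $\Z_2^2\times\Z_3\cong\Z_3\times\Z_2^2$, which by Corollary~\ref{coro1} satisfies the hypotheses of Theorem~\ref{main} and hence is \emph{not} a CI-group with respect to ternary relational structures; so $p=3$ indeed forces failure.

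For the ``if'' direction I would split the primes $p\neq 3$ into two families. When $p\neq 3,7$ (including $p=2$, where the group is the subgroup $\Z_2^3\le\Z_2^4$), Theorem~A gives that $\Z_2^3\times\Z_p$ is a CI-group with respect to color ternary relational structures. Since $\Z_2^2\times\Z_p\le\Z_2^3\times\Z_p$ and a subgroup of a CI-group with respect to color ternary relational structures is again a CI-group of the same kind---the principle underlying the remark just before this statement---it follows that $\Z_2^2\times\Z_p$ is a CI-group with respect to color ternary relational structures for every $p\neq 3,7$. The prime $p=7$ is left uncovered precisely because $\Z_2^3\times\Z_7$ is \emph{not} a CI-group, so the subgroup principle yields nothing and a direct argument is needed.

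The heart of the proof is to show directly that $G=\Z_2^2\times\Z_7$, a group of order $28$, is a CI-group with respect to color ternary relational structures. I would invoke Babai's criterion: $G$ has this property if and only if, for every Cayley ternary relational structure $X$ of $G$ and every $\delta\in\Sym(G)$ with $\delta^{-1}G_R\delta\le\Aut(X)$, the two regular subgroups $G_R$ and $\delta^{-1}G_R\delta$ are conjugate in $\Aut(X)$, where $G_R$ is the right regular representation. The problem then becomes the analysis of the transitive groups of degree $28$ that arise as the appropriate closures of $\Aut(X)$ and contain a regular copy of $G$. I would exploit that the Sylow $7$-subgroup of $G$ is characteristic and that $G$ preserves the two block systems, with blocks of size $4$ and of size $7$, and then adapt the Schur-ring analysis of Kov\'acs and Muzychuk~\cite{KovacsM2009} that settles the binary case to the ternary setting, tracking how the ternary relations refine the binary ones.

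The main obstacle is this last step. The failure of $\Z_2^3\times\Z_7$ originates in the interaction between the prime $2$ and $\Aut(\Z_7)\cong\Z_6$, whose order is even: with a full $\Z_2^3$ one has enough room to build regular subgroups isomorphic to the group that no automorphism of the structure conjugates to one another. The crux of the $p=7$ case is to prove that with only $\Z_2^2$ this room disappears, so that every admissible $\Aut(X)$ forces $G_R$ and $\delta^{-1}G_R\delta$ to be conjugate. I expect the delicate work to be the complete enumeration of the candidate automorphism groups and the verification that no obstruction survives---a verification that genuinely uses the ternary relations, since these can separate structures that binary relations cannot.
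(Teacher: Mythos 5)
Your overall decomposition matches the paper's: the ``only if'' direction is exactly Corollary~\ref{coro1} applied to $\Z_3\times\Z_2^2$, and for $p\neq 3,7$ the paper likewise deduces the result from Theorem~A together with the (standard, and implicitly invoked) fact that subgroups of CI-groups with respect to color $k$-ary relational structures are again CI-groups of the same kind. So the reduction to the single group $\Z_2^2\times\Z_7$ is correct and is the same reduction the paper makes.

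The genuine gap is that the case $p=7$ --- the only case carrying new content --- is not proved but only planned. The paper settles it by an exhaustive machine computation over the transitive permutation groups of degree $28$ (remarking that a hand argument parallel to the proof of Corollary~A for $\Z_2^3\times\Z_7$ would also work). You correctly identify the right criterion (Lemma~\ref{CItool}) and the relevant block systems, but you supply no actual argument: there is no enumeration of the candidate groups $\la H_L,\phi^{-1}H_L\phi\ra$, no analogue of Lemma~\ref{57tool} for blocks of size $4$ (where the primitive constituents are only $\Alt(4)$ and $\Sym(4)$, a quite different analysis from the $\AGL(3,2)$ case), and no executed adaptation of the Kov\'acs--Muzychuk method to ternary relations; the passage ``I expect the delicate work to be the complete enumeration \dots and the verification that no obstruction survives'' is precisely the proof, and it is missing. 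A secondary point: your heuristic for why $\Z_2^3\times\Z_7$ fails while $\Z_2^2\times\Z_7$ does not is misdirected. The obstruction in Theorem~\ref{main} comes from $\GL(3,2)$ containing an element of order $7$ acting fixed-point-freely on $\Z_2^3-\{0\}$; it is the absence of an element of order $7$ in $\GL(2,2)\cong\Sym(3)$, not any property of $\Aut(\Z_7)\cong\Z_6$, that prevents the analogous construction on $\Z_2^2\times\Z_7$ --- and of course the non-applicability of that one construction does not by itself show the group is CI, which is why the degree-$28$ analysis (by hand or by computer) still has to be carried out.
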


Finally, using Magma~\cite{Magma} and GAP~\cite{GAP}, we show that $\Z_2^5$ is not a CI-group with respect to ternary relational structures.

We conclude this introductory section with the formal definition of the objects we are interested in.
\begin{defin}{\rm
A {\it $k$-ary relational structure} is an ordered pair $X =
(V,E)$, with $V$ a set and $E$ a subset of $V^k$.
Furthermore, a {\it color $k$-ary relational structure} is an ordered pair $X =
(V,(E_1,\ldots,E_c))$, with $V$ a set and $E_1,\ldots,E_c$ pairwise disjoint subsets of $V^k$.  If $k = 2,3$, or $4$, we simply say that $X$ is a (color) binary, ternary, or quaternary relational structure.}
\end{defin}

The following two definitions are due to Babai~\cite{Babai1977}.

\begin{defin}\label{def:2}{\rm For a group $G$, define $g_L:G\rightarrow G$ by $g_L(h)
  = gh$, and let $G_L = \{g_L:g\in G\}$.  Then
  $G_L$ is a permutation group on $G$, called the {\it left regular representation of
    $G$}.  We will say that a (color) $k$-ary relational structure $X$ is a {\it
   Cayley (color) $k$-ary relational structure of $G$} if $G_L\le\Aut(X)$ (note that this  implies $V = G$).  In general, a combinatorial object $X$ will be called a
  {\it Cayley object of $G$} if $G_L\le\Aut(X)$.}
\end{defin}

\begin{defin}\label{def:3}{\rm
For a class ${\cal C}$ of Cayley objects of $G$, we say that $G$ is
a {\it CI-group with respect to ${\cal C}$} if whenever $X,Y\in{\cal
C}$, then $X$ and $Y$ are isomorphic if
and only if they are isomorphic by a group automorphism of $G$.}
\end{defin}

It is clear that if $G$ is a CI-group with respect to {\em color} $k$-ary relational structures, then $G$ is a CI-group with respect to $k$-ary relational structures.

\begin{defin}{\rm
For $g,h$ in  $G$, we denote the commutator  $g^{-1}h^{-1}gh$ of $g$
and $h$  by $[g,h]$.}
\end{defin}

\section{The main ingredient and Theorem~A}\label{thm}
We start by proving the main ingredient for our proof of Theorem~A.
\begin{thrm}\label{main}
Let $G$ be an abelian group and $p$ an odd prime. Assume that there
exists an automorphism $\alpha$ of $G$ of order $p$
fixing only the zero element of $G$. Then $\Z_p\times G$ is not a $\mathrm{CI}$-group with respect to color ternary relational structures. Moreover, if there exists a ternary relational structure on $G$ with automorphism group $\la G_L,\alpha\ra$, then $\Z_p\times G$ is not a $\mathrm{CI}$-group with respect to ternary relational structures.
\end{thrm}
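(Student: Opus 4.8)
The plan is to apply Babai's criterion~\cite{Babai1977}: a group $H$ fails to be a CI-group with respect to a class $\mathcal{C}$ as soon as some $X\in\mathcal{C}$ that is a Cayley object of $H$ admits two regular subgroups isomorphic to $H$ that are not conjugate in $\Aut(X)$. I would work on the vertex set $V=\Z_p\times G$, writing its elements as pairs $(i,g)$, and single out the block system $\mathcal{B}=\{\{i\}\times G:i\in\Z_p\}$. Put $H=\Z_p\times G$ and realise $H_1=H_L$ concretely as $\la D_b\ (b\in G),\ \tau\ra$, where $D_b\colon(i,g)\mapsto(i,g+b)$ and $\tau\colon(i,g)\mapsto(i+1,g)$. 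Using $\alpha$, set $\phi\colon(i,g)\mapsto(i,\alpha^i(g))$ and $H_2=\phi H_1\phi^{-1}$; a direct computation gives $H_2=\la\tilde D_b,\ \tau_\alpha\ra$ with $\tilde D_b\colon(i,g)\mapsto(i,g+\alpha^i(b))$ and $\tau_\alpha=\tau D_\alpha\colon(i,g)\mapsto(i+1,\alpha(g))$, where $D_\alpha\colon(i,g)\mapsto(i,\alpha(g))$. Both $H_1$ and $H_2$ are regular and isomorphic to $H$, so everything reduces to building a (color) ternary relational structure $X$ with $H_1,H_2\le\Aut(X)$ inside which $H_1$ and $H_2$ are not conjugate.

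First I would establish non-conjugacy in a transparent overgroup. Let $D_G=\{D_b:b\in G\}$ and $\tilde G=\{\tilde D_b:b\in G\}$ be the two ``diagonal'' copies of $G$ inside the base group $B$ of all coordinatewise translations $(i,g)\mapsto(i,g+b_i)$, and set $\Lambda=\la H_1,H_2\ra$. Since $\tau$ and $D_\alpha$ commute and normalise both $D_G$ and $\tilde G$, one checks that $\Lambda=B_0\rtimes(\la\tau\ra\times\la D_\alpha\ra)$ with $B_0=\la D_G,\tilde G\ra$ and $\la\tau,D_\alpha\ra\cong\Z_p\times\Z_p$. Because $\alpha$ fixes only $0$ we have $D_G\cap\tilde G=0$, so $|B_0|=|G|^2$, which is a \emph{proper} subgroup of $B=G^p$ as $p\ge 3$; this properness is what will later exclude $\phi$. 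Also $\gcd(p,|G|)=1$, so $B_0$ is the unique, hence characteristic, Hall $p'$-subgroup of $\Lambda$, and $D_G,\tilde G$ are the characteristic Hall $p'$-subgroups of $H_1,H_2$. Thus any $\gamma\in\Lambda$ conjugating $H_1$ to $H_2$ must send $D_G$ to $\tilde G$; but conjugation by $\gamma=\beta\tau^kD_\alpha^l$ acts on the normal subgroup $B_0$ (a submodule of $G^p$) as a cyclic coordinate shift composed with the diagonal $\alpha^l$, and this fixes the constant diagonal $D_G$ setwise. Hence $\gamma D_G\gamma^{-1}=D_G\neq\tilde G$, so $H_1,H_2$ are not conjugate in $\Lambda$; in particular $\phi\notin\Lambda$.

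For the color statement I would take $X$ to be the color ternary relational structure whose color classes are the orbits of $\Lambda$ on $V^3$, so that $\Aut(X)=\Lambda^{(3)}$, the ternary closure of $\Lambda$, which contains $H_1$ and $H_2$. By Babai's criterion it then suffices to show that $H_1$ and $H_2$ remain non-conjugate in $\Lambda^{(3)}$. The point is that ``same block'' is a union of $\Lambda$-orbitals, so $\mathcal{B}$ is preserved by $\Lambda^{(3)}$, and the ternary orbits moreover record the cyclic block-difference pattern and the additive linkage among blocks cut out by the proper submodule $B_0$. I would argue that these invariants force every element of $\Lambda^{(3)}$ to permute the blocks cyclically, to act within blocks compatibly with $M=\la G_L,\alpha\ra$, and to preserve $B_0$ together with its constant diagonal, so that the Hall $p'$-argument of the previous paragraph runs verbatim in $\Lambda^{(3)}$ and $\phi\notin\Lambda^{(3)}$.

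For the second (non-color) statement the only change is the encoding: in place of color classes I would realise the within-block action of $M$ by a single copy, on each block, of the hypothesised ternary relational structure on $G$ with automorphism group $\la G_L,\alpha\ra$, and realise the inter-block linkage $B_0$ and the cyclic ordering of $\mathcal{B}$ by one further ternary relation, amalgamating everything into a single relation $E$ on $V$ with $H_1,H_2\le\Aut(V,E)$ and the same non-conjugacy. I expect the main obstacle to be precisely the closure step of the third paragraph: controlling $\Lambda^{(3)}$ (equivalently the constructed $\Aut(X)$) tightly enough to guarantee that no element---above all $\phi$, which lives in the larger wreath-type group $M\wr\Z_p$ but not in $\Lambda$---conjugates $H_1$ to $H_2$. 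This is where the hypotheses that $p$ is odd and that $\alpha$ is fixed-point-free of order $p$ should enter decisively, through the properness of $B_0$ and the fact that a genuinely ternary relation is needed to pin down the cyclic linkage of the $p$ blocks.
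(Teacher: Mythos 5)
Your reduction to Babai's criterion and your analysis inside $\Lambda=\la H_1,H_2\ra$ are correct as far as they go: $\Lambda=B_0\rtimes(\la\tau\ra\times\la D_\alpha\ra)$, the subgroups $D_G$ and $\tilde G$ are the (characteristic) Hall $p'$-subgroups of $H_1$ and $H_2$, and no element of $\Lambda$ conjugates one to the other. The genuine gap is the passage from $\Lambda$ to $\Aut(X)=\Lambda^{(3)}$, which you defer to a list of ``invariants'' without proof --- and the invariants you actually name do not do the job. The permutation $\phi\colon(i,g)\mapsto(i,\alpha^i(g))$ conjugates $H_1$ to $H_2$, preserves the block system ${\cal B}$ (indeed fixes every block, so induces an element of the cyclic quotient), and acts on the block $\{i\}\times G$ as $\alpha^i\in\la G_L,\alpha\ra$; thus it passes every test you establish. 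The one listed property that would exclude a conjugator --- that conjugation must preserve $B_0$ together with its constant diagonal $D_G$ --- is precisely the assertion that has to be extracted from the ternary orbit structure, and no argument is offered for it. Note also that it is only at this missing step that the hypothesis that $p$ is odd can enter in an essential way: everything you have actually carried out would apply verbatim when $p=2$, where the conclusion is not claimed. The one-sentence treatment of the uncolored case inherits the same problem.

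For comparison, the paper sidesteps the closure problem entirely and never computes $\Aut(X)$. It writes down an explicit edge set built from the commutators $[\hat g,\gamma]$ together with a copy $U$ of a structure $Z$ on one fiber with $\Aut(Z)=\la G_L,\alpha\ra$ (obtained from Wielandt's theorem), verifies directly that both $(\Z_p\times G)_L$ and $\gamma^{-1}(\Z_p\times G)_L\gamma$ are automorphisms, proves $X\neq\gamma(X)$ by a computation in the endomorphism ring of $G$ (the identity $(\alpha^{-1}-\alpha)(\iota-\alpha)=0$ forces $p=2$ --- the decisive use of $p$ odd and of $\iota-\alpha$ being invertible), and then rules out any isomorphism by a group automorphism using $\beta(U)=U$ and $\Aut(Z)=\la G_L,\alpha\ra$. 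If you wish to keep the $3$-closure route, you must replace your third paragraph by an actual analysis of the orbits of $\Lambda$ on triples strong enough to pin down $\fix_{\Lambda^{(3)}}({\cal B})$ --- in effect reproving the paper's inequality $\gamma(S-U)\neq S-U$ --- so as written the proposal is not a proof.
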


\begin{proof}
Since $\alpha$ fixes only the zero element of $G$, we have $\vert G\vert \equiv 1\ (\mod p)$ and so $\gcd (p,\vert G\vert)=1$.

For each $g\in G$, define $\hat{g}:\Z_p\times G\to\Z_p\times G$ by
$\hat{g}(i,j) = (i,j+g)$.  Additionally, define
$\tau,\gamma,\bar{\alpha}:\Z_p\times G\to\Z_p\times G$ by
$\tau(i,j) = (i + 1,j)$, $\gamma(i,j) = (i,\alpha^i(j))$, and
$\bar{\alpha}(i,j) = (i,\alpha(j))$. Then $(\Z_p\times G)_L =
\la\tau,\hat{g}:g\in G\ra$.

Clearly, $\la G_L,\alpha\ra =G_L\rtimes \la \alpha\ra$ is a subgroup of $\Sym(G)$ (where $G_L$ acts on $G$ by  left multiplication and $\alpha$ acts as an automorphism). Note that the stabilizer of $0$ in
$\la G_L,\alpha\ra$ is $\la\alpha\ra$. As $\alpha$ fixes only $0$, we
conclude that for every $g\in G$ with $g\neq 0$, the point-wise stabilizer of $0$ and $g$
in $\langle G_L,\alpha\rangle $ is $1$. Therefore,  by \cite[Theorem 5.12]{Wielandt1969},
there exists a color Cayley ternary relational structure $Z$ of $G$ such that
$\Aut(Z) = \la G_L,\alpha\ra$.  If there exists also a ternary relational structure with automorphism group $\la G_L,\alpha\ra$, then we let $Z$ be one such ternary relational structure.

Let $$U = \{((0_{\Z_p},g),(0_{\Z_p},h)):(0_G,g,h)\in
E(Z)\}\quad \mathrm{and}\quad S =
\{([\hat{g},\gamma](1,0_G),[\hat{g},\gamma](2,0_G)):g\in G\}\cup U$$
and define a (color) ternary relational structure $X$ by $$V(X) = \Z_p\times G\quad
\mathrm{and}\quad E(X) = \{k(0_{\Z_p\times G},s_1,s_2):(s_1,s_2)\in S,k\in(\Z_p\times G)_L\}.$$

\noindent If $Z$ is a color ternary relational structure, then we assign to the edge $k(0_{\Z_p\times G},s_1,s_2)$ the color of the edge $(0_G,g,h)$ in $Z$ if $(s_1,s_2)\in U$ and $(s_1,s_2) = ((0_{\Z_p},g),(0_{\Z_p},h))$, and otherwise we assign a fixed color distinct from those used in $Z$.  By definition of $X$ we have $(\Z_p\times G)_L\le\Aut(X)$ and so $X$ is a (color) Cayley ternary
relational structure of $\Z_p\times G$.

We claim that $\bar{\alpha}\in \Aut(X)$.  As $\bar{\alpha}$ is an
automorphism of $\Z_p\times G$, we have that $\bar{\alpha}\in\Aut(X)$ if and only if
$\bar{\alpha}(S) = S$ and $\bar{\alpha}$ preserves colors (if $X$ is a color ternary relational structure). By definition of $Z$ and $U$, we have $\bar{\alpha}(U) = U$ and $\bar{\alpha}$ preserves colors (if $X$ is a color ternary relational structure). So, it suffices to consider the case $s\in S-U$, i.e., $s=
([\hat{g},\gamma](1,0),[\hat{g},\gamma](2,0))$ for some $g\in G$. Note that now we need not consider colors as all the edges in $S-U$ are of the same color.
Then $\bar{\alpha}\hat{g}(i,j) = (i,\alpha(j) +
\alpha(g)) = \widehat{\alpha(g)}\bar{\alpha}(i,j)$. Thus
$\bar{\alpha}\hat{g} = \widehat{\alpha(g)}\bar{\alpha}$.  Similarly,
$\bar{\alpha}\hat{g}^{-1} = \widehat{\alpha(g)}^{-1}\bar{\alpha}$.  Clearly
$\bar{\alpha}$ commutes with $\gamma$, and so
$\bar{\alpha}[\hat{g},\gamma] = [\widehat{\alpha(g)},\gamma]\bar{\alpha}$. As
$\bar{\alpha}$ fixes $(1,0)$ and $(2,0)$, we see that

\begin{eqnarray*}
\bar{\alpha}(s)=\bar{\alpha}([\hat{g},\gamma](1,0),[\hat{g},\gamma](2,0)) & = &
(\bar{\alpha}[\hat{g},\gamma](1,0),\bar{\alpha}[\hat{g},\gamma](2,0))\\
& =&
([\widehat{\alpha(g)},\gamma]\bar{\alpha}(1,0),
[\widehat{\alpha(g)},\gamma]\bar{\alpha}(2,0))\\
& = &
([\widehat{\alpha(g)},\gamma](1,0),[\widehat{\alpha(g)},\gamma](2,0))\in
(S-U).
\end{eqnarray*}

\noindent Thus $\bar{\alpha}(S) = S$, $\bar{\alpha}$ preserves colors (if $X$ is a color ternary relational structure) and $\bar{\alpha}\in\Aut(X)$.

We claim that $\gamma^{-1}(\mathbb{Z}_p\times G)_L\gamma$ is a
subgroup of $\Aut(X)$. We set $\tau'=\gamma^{-1}\tau\gamma$ and
$g'=\gamma^{-1} \hat{g}\gamma$, for $g\in G$.
Note that $\tau'=\tau \bar{\alpha}^{-1}$. As
$\bar{\alpha}\in\Aut(X)$, we
have that $\tau'\in\Aut(X)$. Therefore it remains to prove that
$\langle g' : g\in G\rangle$ is a subgroup of $\Aut(X)$. Let $e\in
E(X)$ and $g\in G$. Then $e=k((0,0),s)$, where $s\in S$ and
$k=\tau^a\widehat{l}$, for some $a\in\mathbb{Z}_p$, $l\in G$. We have
to prove that $g'(e)\in E(X)$ and has the same color of $e$ (if $X$ is a color ternary relational structure).

Assume that $s\in U$. As $g'(i,j)=(i,j+\alpha^{-i}(g))$, by
definition of $U$, we have $g'[k((0,0),s)]\in E(X)$ and has the same color of $e$ (if $X$ is a color ternary relational structure). So, it
remains to consider the case
$s\in S - U$, i.e., $s=([\widehat{x},\gamma](1,0),[\widehat{x},\gamma](2,0))$ for
some $x\in G$.  As before, we need not concern ourselves with colors because all the edges in $S-U$ are of the same color.

Set $m=k\widehat{\alpha^{-a}(g)}$. Since
$\bar{\alpha}\widehat{g}=\widehat{\alpha(g)}\bar{\alpha}$
and
$\bar{\alpha},\gamma$ commute, we get
$\bar{\alpha}g'=(\alpha(g))'\bar{\alpha}$. Also observe that
as $G$ is abelian, $g'$ commutes with $\widehat{h}$ for every $g,h\in G$.
Hence

\begin{eqnarray*}
g'k&=&\gamma^{-1}\widehat{g}\gamma\tau^a\widehat{l}=\gamma^{-1}\widehat{g}\tau^a\gamma\bar{\alpha}^a\widehat{l}=\gamma^{-1}\tau^a\widehat{g}\bar{\alpha}^a\gamma\widehat{l}\\
&=&\tau^a\gamma^{-1}\bar{\alpha}^{-a}\widehat{g}\bar{\alpha}^a\gamma\widehat{l}=\tau^a(\alpha^{-a}(g))'\widehat{l}=\tau^a
\widehat{l}(\alpha^{-a}(g))'\\
&=&k\widehat{\alpha^{-a}(g)}\widehat{\alpha^{-a}(g)}^{-1}\gamma^{-1}\widehat{\alpha^{-a}(g)}\gamma=m[\widehat{\alpha^{-a}(g)},\gamma]    \end{eqnarray*}
and
\begin{eqnarray*}
g'[k((0,0),s)]&=&g'k((0,0),[\widehat{x},\gamma](1,0),[\widehat{x},\gamma](2,0))\\
&=&m[\widehat{\alpha^{-a}(g)},\gamma]((0,0),[\widehat{x},\gamma](1,0),[\widehat{x},\gamma](2,0))\\
&=&m((0,0),[\widehat{\alpha^{-a}(g)},\gamma][\widehat{x},\gamma](1,0),[\widehat{\alpha^{-a}(g)},\gamma][\widehat{x},\gamma](2,0))\\
&=&m((0,0),[\widehat{\alpha^{-a}(g)x},\gamma](1,0),[\widehat{\alpha^{-a}(g)x},\gamma](2,0))\in E(X).
\end{eqnarray*}

\noindent This proves that $g'\in \Aut(X)$. Since $g$ is an arbitrary element of
$G$, we have $\gamma^{-1}G_L\gamma\subseteq \Aut(X)$. As claimed,
$\gamma^{-1}(\mathbb{Z}_p\times G)_L\gamma$ is a regular
subgroup of $\Aut(X)$ conjugate in $\Sym(\mathbb{Z}_p\times G)$ to
$(\mathbb{Z}_p\times G)_L$.

We now have that $Y = \gamma(X)$ is a Cayley (color)
ternary relational structure of $\Z_p\times G$ as $\Aut(Y) =
\gamma\Aut(X)\gamma^{-1}$.  We will next show that $Y\not = X$.  Assume by way of contradiction that $Y=X$. As $\gamma(0,g)=(0,g)$ for every
$g\in G$, the permutation $\gamma$ must map edges of
$U$ to themselves, so that $\gamma(S-U) = S - U$.  We will show that $\gamma(S-U)\not = S - U$. Note that we need not concern ourselves with colors because as all the edges derived from $S - U$ via translations of $(\Z_p\times G)_L$ have the same color.  Observing that

\begin{eqnarray*}
([\hat{g},\gamma](1,0),[\hat{g},\gamma](2,0)) & = &
(\hat{g}^{-1}\gamma^{-1}\hat{g}\gamma(1,0),\hat{g}^{-1}\gamma^{-1}\hat{g}\gamma(2,0)) =
(\hat{g}^{-1}\gamma^{-1}\hat{g}(1,0),\hat{g}^{-1}\gamma^{-1}\hat{g}(2,0))\\
& = & (\hat{g}^{-1}\gamma^{-1}(1,g),\hat{g}^{-1}\gamma^{-1}(2,g))
 =
(\hat{g}^{-1}(1,\alpha^{-1}(g),\hat{g}^{-1}(2,\alpha^{-2}(g))\\
& = & ((1,\alpha^{-1}(g) - g),(2,\alpha^{-2}(g) - g)),
\end{eqnarray*}

\noindent we see that $\gamma(S - U) = \{((1,g - \alpha(g)),(2,g -
\alpha^2(g))):g\in G\}$. Moreover, as $S-U=\{(1,\alpha^{-1}(g)-g),(2,\alpha^{-2}(g)-g): g \in G\}$, we conclude that for each $g\in G$,
there exists $h_g\in G$ such that

$$g - \alpha(g) = \alpha^{-1}(h_g) - h_g{\rm\ \ \ \ \ and\ \ \ \ \ }g
- \alpha^2(g) = \alpha^{-2}(h_g) - h_g.$$

\noindent Setting $\iota:G\to G$ to be the identity permutation, we
may rewrite the above equations as

$$(\iota - \alpha)(g) = (\alpha^{-1} - \iota)(h_g){\rm \ \ \ \ \ and \ \
\ \ \ }(\iota - \alpha^2)(g) = (\alpha^{-2} - \iota)(h_g).$$

\noindent Computing in the endomorphism ring of the abelian group $G$, we see that $(\alpha^{-2} - \iota) =
(\alpha^{-1}+\iota)(\alpha^{-1} - \iota)$.  Applying the
endomorphism $(\alpha^{-1} + \iota)$ to the first equation above, we
then have that

$$
(\alpha^{-1} + \iota)(\iota - \alpha)(g)  =  (\alpha^{-1} +
\iota)(\alpha^{-1} - \iota)(h_g)
 =  (\alpha^{-2} - \iota)(h_g)
 =  (\iota - \alpha^2)(g).$$

\noindent Hence $(\alpha^{-1} + \iota)(\iota - \alpha) = \iota -
\alpha^2$, and so $$0=(\alpha^{-1}+\iota)(\iota-\alpha)-(\iota-\alpha^2)=((\alpha^{-1}+\iota)-(\iota+\alpha))(\iota-\alpha)=(\alpha^{-1}-\alpha)(\iota-\alpha),$$
(here
$0$ is the endomorphism of $G$ that maps each element of $G$ to
$0$).
As $\alpha$ fixes only $0$, the endomorphism $\iota-\alpha$ is invertible, and so
we see that $\alpha^{-1} -\alpha= 0$, and $\alpha = \alpha^{-1}$. However,
this implies that $p=\vert\alpha\vert = 2$, a contradiction. Thus
$\gamma(S - U)\not = S - U$ and so $Y\not = X$.

We set $T = \gamma(S)$, so that $((0,0),t)\in E(Y)$ for every $t\in T$, where if $X$ is a color ternary relational structure we assume that $\gamma$ preserves colors.  Now suppose that there exists $\beta\in\Aut(\Z_p\times G)$ such
that $\beta(X) = Y$. Since $\gcd(p,\vert G\vert)=1$, we obtain that $\Z_p\times 1_G$ and $1_{\Z_p}\times G$ are characteristic
subgroups of $\Z_p\times G$. Therefore $\beta(i,j) = (\beta_1(i),\beta_2(j))$,
where $\beta_1\in\Aut(\Z_p)$ and $\beta_2\in\Aut(G)$.  As
$\beta$ fixes $(0,0)$, we must have that $\beta(S) = T$. As there
is no element of $T$ of the form $((2,x_1),(1,y_1))$, we conclude
that $\beta_1 = 1$ as $\beta_1(i) = i$ or $2i$.  As
$\bar{\alpha}\in\Aut(X)$ and $X\neq Y$, we have that
$\beta_2\not\in\la\alpha\ra$.  Now observe that $\beta(U) = U$.
Thus $\beta_2\in\Aut(Z) = \la G_L,\alpha\ra$. We conclude that
$\beta_2\in\la\alpha\ra$, a contradiction. Thus $X,Y$ are not
isomorphic by a group automorphism of $\Z_p\times G$, and the result follows.
\end{proof}

The following two lemmas, which in our opinion are of independent interest, will be used (together with Theorem~\ref{main}) in the proof of Corollary~\ref{coro1}.

\begin{lem}\label{Stabkary}
Let $G$ be a transitive permutation group on $\Omega$.  If $x\in\Omega$ and $\Stab_G(x)$ in its action on $\Omega - \{x\}$ is the automorphism group of a $k$-ary relational structure with vertex set $\Omega - \{x\}$, then $G$ is the automorphism group of a $(k+1)$-ary relational structure.
\end{lem}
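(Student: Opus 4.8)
The plan is to build the required structure by placing a copy of the given $k$-ary structure as an ``apex'' at $x$ and then spreading it over $\Omega$ by means of $G$. Write $H=\Stab_G(x)$ and let $Y=(\Omega-\{x\},F)$ be the $k$-ary relational structure whose automorphism group equals the image of $H$ in its action on $\Omega-\{x\}$; since $H$ fixes $x$, restriction to $\Omega-\{x\}$ has trivial kernel and so identifies $H$ with $\Aut(Y)$. I would then define a $(k+1)$-ary relational structure $X$ with $V(X)=\Omega$ and
$$E(X)=\{g\cdot(x,v_1,\dots,v_k):g\in G,\ (v_1,\dots,v_k)\in F\},$$
where $g$ acts diagonally, $g\cdot(u_0,\dots,u_k)=(g(u_0),\dots,g(u_k))$. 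By construction $E(X)$ is $G$-invariant, so $G\le\Aut(X)$, and the entire task is to show there are no further automorphisms.

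Next I would reduce the equality $\Aut(X)=G$ to a statement about a single point stabilizer. Set $A=\Aut(X)$; since $G\le A$ and $G$ is transitive, $A$ is transitive on $\Omega$. It therefore suffices to prove $A_x=H$: given any $a\in A$, transitivity of $G$ yields $g\in G$ with $g(x)=a(x)$, whence $g^{-1}a\in A_x=H\le G$ and so $a\in G$; together with $G\le A$ this gives $A=G$. The inclusion $H\le A_x$ is immediate, so everything comes down to $A_x\le H$.

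For this the key observation is that $F$ can be recovered from the ``link'' of $x$, i.e.\ the set of edges of $X$ whose first coordinate is $x$. On the one hand every tuple $(x,w_1,\dots,w_k)$ with $(w_1,\dots,w_k)\in F$ lies in $E(X)$ (take $g=1$). On the other hand, if $g\cdot(x,v_1,\dots,v_k)\in E(X)$ has first coordinate $x$, then $g(x)=x$ forces $g\in H$, and since $H$ preserves $F$ we obtain $(g(v_1),\dots,g(v_k))\in F$; hence the link of $x$ is exactly $\{x\}\times F$. Now any $\phi\in A_x$ fixes $x$ and so permutes this link: for each $(v_1,\dots,v_k)\in F$ the tuple $(x,\phi(v_1),\dots,\phi(v_k))$ lies in $E(X)$ with first coordinate $x$, giving $(\phi(v_1),\dots,\phi(v_k))\in F$. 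Thus $\phi$ restricts to a bijection of $\Omega-\{x\}$ preserving $F$, i.e.\ an element of $\Aut(Y)$; lifting this restriction through the identification $H\cong\Aut(Y)$ and using that both $\phi$ and its lift fix $x$ forces $\phi\in H$. Hence $A_x\le H$, and the reduction above completes the proof.

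The main obstacle is the second half of the link computation: one must be sure that no ``spurious'' edge of $E(X)$ can land on the apex $x$ via a group element that moves $x$. This is precisely where the exact hypothesis (that $H$, and not some strictly larger group, is the \emph{full} automorphism group of $Y$) is used twice over—first to guarantee that every $g\in H$ preserves $F$, which pins the link down to $\{x\}\times F$, and second to recover $\phi$ itself inside $H$ once its restriction is known to lie in $\Aut(Y)$.
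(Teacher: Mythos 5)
Your proposal is correct and follows essentially the same route as the paper's proof: the same orbit construction $E(X)=\{g(x,v_1,\dots,v_k):g\in G,\ (v_1,\dots,v_k)\in E(Y)\}$, the same identification of the link of $x$ with $\{x\}\times E(Y)$, and the same reduction via transitivity to showing that the stabilizer of $x$ in $\Aut(X)$ restricts to $\Aut(Y)=\Stab_G(x)$.
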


\begin{proof}
Let $Y$ be a $k$-ary relational structure with vertex set $\Omega - \{x\}$ and automorphism group $\Stab_G(x)$ in its action on $\Omega - \{x\}$.  Let $W = \{(x,v_1,\ldots,v_k):(v_1,\ldots,v_k)\in E(Y)\}$, and define a $(k+1)$-ary relational structure $X$ by $V(X) = \Omega$ and $E(X) = \{g(w):w\in W \,\mathrm{ and }\, g\in G \}$.  We claim that $\Aut(X) = G$.  First, observe that $\Stab_G(x)$ maps $W$ to $W$. Also, if $e\in E(X)$ and $e = (x,v_1,\ldots,v_k)$ for some $v_1,\ldots,v_k\in \Omega$, then there exists $ (x,u_1,\ldots,u_k)\in W$ and $g\in G$ with $g(x,u_1,\ldots,u_k) = (x,v_1,\ldots,v_k)$.  We conclude that $g(x) = x$ and $g(u_1,\ldots,u_k) = (v_1,\ldots,v_k)$.  Hence $g\in\Stab_G(x)$ and $(v_1,\ldots,v_k)\in E(Y)$.  Then $W$ is the set of all edges of $X$ with first coordinate $x$.

By construction, $G\le\Aut(X)$.  For the reverse inclusion, let $h\in\Aut(X)$.  As $G$ is transitive, there exists $g\in G$ such that $g^{-1}h\in\Stab_{\Aut(X)}(x)$.  Note that as $g\in G$, the element $g^{-1}h\in G$ if and only if $h\in G$.  We may thus assume without loss of generality that $h(x) = x$.  Then $h$ stabilizes set-wise the set of all edges of $X$ with first coordinate $x$, and so $h(W) = W$ and $h$ induces an automorphism of $Y$. As $\Aut(Y) = \Stab_G(x)\leq G$, the result follows.
\end{proof}

\begin{lem}\label{autosemiregular}
Let $m\ge 2$ be an integer and $\rho\in \Sym(\Z_{ms})$ be a semiregular element of order $m$ with $s$ orbits.  Then there exists a digraph with vertex set $\Z_{ms}$ and with automorphism group $\la\rho\ra$.
\end{lem}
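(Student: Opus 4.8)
The plan is to introduce coordinates adapted to $\rho$ and then exhibit one explicit $\rho$-invariant digraph whose arc set rigidifies the whole configuration. Since $\rho$ is semiregular of order $m$ with $s$ orbits, I would first fix one representative in each orbit and identify the vertex set $\Z_{ms}$ with $\Z_m\times\{1,\dots,s\}$ in such a way that $\rho$ becomes the shift $(a,i)\mapsto(a+1,i)$; the $i$-th orbit is then $O_i=\Z_m\times\{i\}$. Every $\rho$-invariant digraph is determined by connection sets $C_{ij}\subseteq\Z_m$, with an arc from $(a,i)$ to $(b,j)$ precisely when $b-a\in C_{ij}$, and for any such choice $\la\rho\ra\le\Aut(D)$ holds automatically. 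The digraph $D$ I would use is given by $C_{ii}=\{1\}$ for every $i$ (so each orbit carries a directed $m$-cycle), $C_{i,i+1}=\{0\}$ for $1\le i\le s-1$ (an aligned perfect matching running forward from $O_i$ to $O_{i+1}$), and $C_{ij}=\emptyset$ in all remaining cases. Note $D$ has no loops and is a genuine digraph.

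Next I would show that every $\varphi\in\Aut(D)$ fixes each $O_i$ setwise. A direct degree count shows that the vertices of in-degree $1$ are exactly those of $O_1$ and the vertices of out-degree $1$ are exactly those of $O_s$, while all remaining vertices have both in- and out-degree $2$; hence $\varphi$ fixes $O_1$. By construction the out-neighbours of $O_i$ lie in $O_i\cup O_{i+1}$, so $O_{i+1}$ is recovered canonically as the out-neighbourhood of $O_i$ with $O_i$ removed. An easy induction starting from the anchor $O_1$ then gives $\varphi(O_i)=O_i$ for all $i$.

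With the orbits individually fixed, I would use two rigidity facts. The subdigraph induced on $O_i$ is a directed $m$-cycle, whose automorphism group is the cyclic group of rotations of order $m$; hence $\varphi|_{O_i}$ is a rotation $(a,i)\mapsto(a+k_i,i)$ for some $k_i\in\Z_m$. The matching arcs then force synchronization: since $\varphi$ preserves the set of arcs from $O_i$ to $O_{i+1}$, which are exactly the pairs $(a,i)\to(a,i+1)$, the image $(a+k_i,i)\to(a+k_{i+1},i+1)$ of such an arc is again an arc only when $k_i=k_{i+1}$. Therefore $k_1=\cdots=k_s=:k$ and $\varphi=\rho^k$, so $\Aut(D)=\la\rho\ra$. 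The degenerate case $s=1$ is immediate, as $D$ is then a single directed $m$-cycle, and the small case $m=2$ causes no trouble since the same statements hold verbatim.

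The hard part will be the pinning-down of the orbits --- in particular ruling out automorphisms that permute the middle orbits $O_2,\dots,O_{s-1}$, which all share the same degree profile, and automorphisms that twist distinct orbits by distinct rotations. The linear ``stacking'' of the matchings (oriented forward and deliberately not wrapping $O_s$ back to $O_1$) is exactly what breaks the potential symmetry among the orbits: it singles out $O_1$ as the in-degree-$1$ layer, and from this anchor the out-neighbourhood induction recovers the orbits one at a time and in order. Once each orbit is fixed setwise, the rigidity of the directed cycles together with the alignment of the matchings eliminates any residual freedom, and I expect no further obstacles beyond carefully recording these degree and neighbourhood computations.
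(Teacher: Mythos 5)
Your proof is correct and follows essentially the same strategy as the paper's: place a directed $m$-cycle on each $\la\rho\ra$-orbit, link the orbits by aligned forward matchings, distinguish the orbits by degree considerations, and use the rotational rigidity of the cycles together with the alignment of the matchings to force every automorphism to be a power of $\rho$. The only (immaterial) difference is that the paper joins each new orbit to representatives of \emph{all} earlier orbits so that in-degrees alone separate every orbit, whereas you join only consecutive orbits and recover the orbit ordering by the out-neighbourhood induction from the anchor $O_1$; both variants are sound.
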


\begin{proof}
For each $i\in \Z_s$, set $$\rho_i = (0,1,\ldots,m-1)\cdots(im,im + 1,\ldots,im+m-1)\quad \textrm{and}\quad V_i = \{im + j:j\in\Z_m\}.$$   We inductively define a sequence of graphs $\Gamma_0,\ldots,\Gamma_{s-1} = \Gamma$ such that the subgraph of $\Gamma$ induced by $\Z_{(i+1)m}$ is $\Gamma_i$, the indegree in $\Gamma$ of a vertex in $V_i$ is $i+1$, and $\Aut(\Gamma_i) = \la\rho_i\ra$, for each $i\in\Z_s$.

We set $\Gamma_0$ to be the directed cycle of length $m$ with edges $\{(j,j+1): j \in \Z_m\}$ and with automorphism group $\la\rho_0\ra$.  Inductively assume that $\Gamma_{s-2}$, with the above properties, has been constructed.  We construct $\Gamma_{s-1}$ as follows.  First, the subgraph of $\Gamma_{s-1}$ induced by $\Z_{(s-1)m}$  is $\Gamma_{s-2}$.  Then we place the directed $m$ cycle $\{((s-1)m+j,(s-1)m+j+1):j \in \Z_m\}$  whose automorphism group is $\la((s-1)m,(s-1)m + 1,\ldots,(s-1)m+m-1)\ra$  on the vertices in $V_{s-1}$.  Additionally, we declare the vertex $(s-1)m$  to be outadjacent to $(s-2)m$ and to every vertex that $(s-2)m$  is outadjacent to that is not contained in $V_{s-2}$. Finally, we add to $\Gamma_{s-1}$ every image of one of these edges under an element of $\langle\rho_{s-1}\rangle$.

By construction, $\rho_{s-1}$ is an automorphism of $\Gamma_{s-1}$ and the subgraph of $\Gamma_{s-1}$ induced by $\Z_{(s-1)m}$  is $\Gamma_{s-2}$.  Then each vertex in $\Gamma_{s-1}\cap V_i$ has indegree $i+1$ for $0\le i\le s-2$, while it is easy to see that each vertex of $V_{s-1}$ has indegree $s$.  Finally, if $\delta\in \Aut(\Gamma_{s-1})$, then $\delta$ maps vertices of indegree $i+1$ to vertices of indegree $i+1$, and so $\delta$ fixes set-wise $V_i$, for every $i\in\Z_s$.  Additionally, the action induced by $\langle\delta\rangle$ on $V_{s-1}$ is necessarily $\la((s-1)m,(s-1)m + 1,\ldots,(s-1)m+m-1)\ra$  as this is the automorphism group of the subgraph of $\Gamma_{s-1}$ induced by $V_{s-1}$. Moreover, arguing by induction, we may assume that the action induced by $\delta$ on $V(\Gamma_{s-1}) - V_{s-1}$ is given by an element of $\langle\rho_{s-2}\rangle$.  If $\delta\not\in\la\rho_{s-1}\ra$, then $\Aut(\Gamma_{s-1})$ has order at least $m^2$, and there is some element of $\Aut(\Gamma_{s-1})$ that is the identity on $V(\Gamma_{s-2})$ but not on $V_{s-1}$ and vice versa.  This however is not possible as each vertex of $V_{s-2}$ is outadjacent to exactly one vertex of $V_{s-1}$.  Then $\Aut(\Gamma_{s-1}) = \la\rho_{s-1}\ra$ and the result follows.
\end{proof}

\begin{cor}\label{coro1}
None of the groups $\Z_3\times\Z_2^2$, $\Z_7\times\Z_2^3$, or $\Z_5\times\Z_2^4$ are CI-groups with
respect to ternary relational structures.
\end{cor}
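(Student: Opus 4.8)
The plan is to apply Theorem~\ref{main} to each of the three groups, all of which have the form $\Z_p\times\Z_2^n$ with $p$ an odd prime satisfying $2^n\equiv 1\pmod p$; explicitly the pairs $(p,n)$ are $(3,2)$, $(7,3)$, and $(5,4)$. To invoke the theorem with $G=\Z_2^n$ I first need, in each case, an automorphism $\alpha$ of $G$ of order $p$ fixing only $0_G$; and in order to reach the stronger (non-color) conclusion demanded by the statement I also need a ternary relational structure on $G$ whose automorphism group is exactly $\la G_L,\alpha\ra$.

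For the automorphism I would identify $\Z_2^n$ with the additive group of the finite field $\mathbb{F}_{2^n}$. In each case $p\mid 2^n-1=|\mathbb{F}_{2^n}^\times|$, so there is an element $\zeta\in\mathbb{F}_{2^n}^\times$ of multiplicative order $p$; I take $\alpha$ to be multiplication by $\zeta$. This is an additive automorphism of order $p$, and $\alpha(x)=x$ forces $(\zeta-1)x=0$, whence $x=0$ since $\zeta\neq 1$. Thus $\alpha$ fixes only $0_G$, and the first assertion of Theorem~\ref{main} immediately yields that each of the three groups fails to be a CI-group with respect to color ternary relational structures.

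To upgrade to ordinary (non-color) ternary relational structures I would manufacture the required structure on $G$ by chaining the two lemmas. Because $\alpha$ fixes only $0_G$, its restriction to $G-\{0_G\}$ is a semiregular permutation of order $p$ on a set of size $2^n-1=ps$ with $s=(2^n-1)/p$ orbits, so that $s=1,1,3$ in the three cases. Applying Lemma~\ref{autosemiregular} with $m=p\ge 2$ produces a digraph on a set of size $ps$ whose automorphism group is cyclic of order $p$, generated by a semiregular element of this orbit type; transporting this digraph along a bijection identifying that generator with $\alpha|_{G-\{0_G\}}$ gives a binary relational structure on $G-\{0_G\}$ with automorphism group exactly $\la\alpha|_{G-\{0_G\}}\ra$. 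Since the stabilizer of $0_G$ in $\la G_L,\alpha\ra$ is $\la\alpha\ra$ and acts on $G-\{0_G\}$ precisely as this group, Lemma~\ref{Stabkary} with $k=2$ then furnishes a ternary relational structure on $G$ with automorphism group $\la G_L,\alpha\ra$. The ``moreover'' clause of Theorem~\ref{main} now delivers the non-color conclusion for each group.

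The main obstacle I anticipate is bookkeeping rather than a genuine difficulty. One must confirm that $\alpha|_{G-\{0_G\}}$ really has the cycle type required by Lemma~\ref{autosemiregular}, namely order exactly $p$ and the correct number of orbits $s=(2^n-1)/p$, and that the transport step preserves the automorphism group \emph{as a permutation group}, so that it coincides with the induced action of the stabilizer of $0_G$ exactly, neither larger nor smaller. Once the orbit counts $s=(2^n-1)/p$ are checked to be positive integers, equivalently $p\mid 2^n-1$, which holds in all three cases, the two lemmas fit together cleanly and the corollary follows.
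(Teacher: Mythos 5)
Your proposal is correct and follows essentially the same route as the paper: produce a fixed-point-free automorphism $\alpha$ of order $p$ on $\Z_2^n$, use Lemma~\ref{autosemiregular} to realize $\la\alpha\ra$ on the nonzero elements as the automorphism group of a digraph, feed that into Lemma~\ref{Stabkary} to get a ternary relational structure with automorphism group $\la(\Z_2^n)_L,\alpha\ra$, and invoke the ``moreover'' clause of Theorem~\ref{main}. The only cosmetic differences are that you construct $\alpha$ explicitly as multiplication by a $p$-th root of unity in $\mathbb{F}_{2^n}$ and apply Lemma~\ref{autosemiregular} uniformly (including the $s=1$ cases, where the paper simply cites the directed cycle directly).
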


\begin{proof}
Observe that $\Z_2^2$ has an automorphism $\alpha_3$ of order $3$ that fixes $0$ and acts regularly on the remaining $3$ elements, and similarly, $\Z_2^3$ has an automorphism $\alpha_7$ of order $7$ that fixes $0$ and acts regularly on the remaining $7$ elements.  As a regular cyclic group is the automorphism group of a directed cycle, we see that $\la(\Z_3\times\Z_2^2)_L,\alpha_3\ra$ and $\la(\Z_7\times\Z_2^3)_L,\alpha_7\ra$ are the automorphism groups of ternary relational structures by Lemma \ref{Stabkary}.  The result then follows by Theorem \ref{main}.

Now $\Z_2^4$ has an automorphism $\alpha_5$ of order $5$ that fixes $0$ and acts semiregularly on the remaining $15$ points.  Then $\la\alpha_5\ra$ in its action on $\Z_2^4 - \{0\}$ is the automorphism group of a binary relational structure by Lemma \ref{autosemiregular}. By Lemma~\ref{Stabkary}, there exists a ternary relational structure with automorphism group $\la(\Z_5\times\Z_2^4)_L,\alpha_5\ra$.  The result then follows by Theorem \ref{main}.
\end{proof}

Before proceeding, we will need terms and notation concerning complete block systems.

Let $G\le \Sym(n)$ be a transitive permutation group (acting on $\Z_n$, say).  A subset $B\subseteq\Z_n$ is a {\it block for $G$} if $g(B) = B$ or $g(B)\cap B = \emptyset$ for every $g\in G$.  Clearly $\Z_n$ and its singleton subsets are always blocks for $G$, and are called {\it trivial blocks}.  If $B$ is a block, then $g(B)$ is a block for every $g\in G$, and the set ${\cal B} = \{g(B):g\in G\}$ is called a {\it complete block system for $G$}, and we say that $G$ {\it admits} ${\cal B}$.  A complete block system is {\it nontrivial} if its blocks are nontrivial.  Observe that a complete block system is a partition of $\Z_n$, and any two blocks have the same size.  If $G$ admits ${\cal B}$ as a complete block system, then each $g\in G$ induces a permutation of ${\cal B}$, which we denote by $g/{\cal B}$.  We set $G/{\cal B} = \{g/{\cal B}:g\in G\}$.  The kernel of the action of $G$ on ${\cal B}$, denoted by $\fix_G({\cal B})$, is then the subgroup of $G$ which fixes each block of ${\cal B}$ set-wise.  That is, $\fix_G({\cal B}) = \{g\in G:g(B) = B{\rm\ for\ all\ }B\in{\cal B}\}$.  For fixed $B\in{\cal B}$, we denote the set-wise stabilizer of $B$ in $G$ by $\Stab_G(B)$.  That is $\Stab_G({\cal B}) = \{g\in G:g(B) = B\}$.  Note that $\fix_G({\cal B}) = \cap_{B\in{\cal B}}\Stab_G(B)$.  Finally, for $g\in\Stab_G(B)$, we denote by $g\vert_B$ the action induced by $g$ on $B\in{\cal B}$.

Note that Corollary~\ref{coro1}, together with the fact that $\Z_2^3\times\Z_p$, $p\ge 11$, is a CI-group with respect to color ternary relational structures~\cite{Dobson2010a}, settles the question of which groups $\Z_2^3\times\Z_p$ are CI-groups with respect to color ternary relational structures  except for $p = 5$.   Our next goal is to show that $\Z_2^3\times\Z_5$ is a CI-group with respect to color ternary relational structures. From a computational point of view, the number of points is too large to enable a computer to determine the answer without some additional information.
Lemma~$6.1$ in~\cite{Dobson2010a} is the only result that uses the hypothesis $p\geq 11$.  For convenience, we report~\cite[Lemma~$6.1$]{Dobson2010a}.
\begin{lem}\label{refine}
Let $p\geq 11$ be a prime and write $H=\Z_2^3\times \Z_p$. For every $\phi\in \Sym(H)$, there exists $\delta\in \la H_L,\phi^{-1}H_L\phi\ra$ such that $\la H_L,\delta^{-1}\phi^{-1}H_L\phi\delta\ra$ admits a complete block system consisting of $8$ blocks of size $p$.
\end{lem}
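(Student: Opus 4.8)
The plan is to analyse the transitive group $G=\la H_L,\phi^{-1}H_L\phi\ra$ on the $8p$ points of $H$ entirely through its Sylow $p$-subgroups, and the inequality $p\ge 11>8$ is the hypothesis I expect to do all the real work. Write $H'=\phi^{-1}H_L\phi$, and let $P\le H_L$ and $P'\le H'$ be the Sylow $p$-subgroups of the two regular groups. Since $H$ is abelian each is cyclic of order $p$, is characteristic, hence $P\tl H_L$ and $P'\tl H'$, and each is semiregular with exactly $8$ orbits of size $p$. The target partition ``$8$ blocks of size $p$'' is morally the orbit partition of such a $\Z_p$-part, so the whole problem is to make the two $p$-parts share a common orbit partition that is then automatically $G$-invariant.

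First I would pin down the orbit structure of a full Sylow $p$-subgroup. Fix a Sylow $p$-subgroup $Q$ of $G$ with $P\le Q$; this exists because $p\mid |H_L|$ divides $|G|$. As $Q$ is a $p$-group, each of its orbits has size a power of $p$; an orbit of size at least $p^2$ is impossible because $p^2>8p$ once $p\ge 11$, and $Q$ has no fixed point since $P\le Q$ is already fixed-point-free. Hence every $Q$-orbit has size exactly $p$, so $Q$ has precisely $8$ orbits, forming a partition $\mathcal{B}$ into $8$ blocks of size $p$. Because $P\le Q$, every $P$-orbit (of size $p$) lies in a $Q$-orbit (of size $p$) and therefore equals it, so $\mathcal{B}$ is exactly the orbit partition of $P$.

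Next I would align the second regular group and assemble the block system. The subgroup $P'$ lies in some Sylow $p$-subgroup $Q'$ of $G$, and by Sylow's theorem there is $\delta\in G=\la H_L,\phi^{-1}H_L\phi\ra$ with $\delta^{-1}Q'\delta=Q$. Setting $K=\delta^{-1}H'\delta$ and $P''=\delta^{-1}P'\delta\le Q$, the same containment argument shows that $P''$ has orbit partition $\mathcal{B}$ as well; crucially I do not need $P''=P$, only that both normal $p$-parts have the \emph{same} orbits. Now $\mathcal{B}$ is the orbit partition of $P\tl H_L$, so $H_L$ permutes its blocks (for $h\in H_L$ one has $h(Px)=(hPh^{-1})(hx)=P(hx)$), and likewise $K$ permutes them because $\mathcal{B}$ is the orbit partition of $P''\tl K$. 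Since preserving a partition is closed under products and inverses, $\la H_L,\delta^{-1}\phi^{-1}H_L\phi\delta\ra=\la H_L,K\ra$ admits $\mathcal{B}$ as a complete block system of $8$ blocks of size $p$, as required.

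The hard part is the orbit-size bound of the second paragraph, and it is the \emph{only} place the hypothesis enters: it is precisely $p>8$ that forces every Sylow-$p$ orbit to have size $p$ rather than $p^2$, which is what collapses $P$ and the conjugate of $P'$ onto a common $G$-invariant partition. For $p\in\{3,5,7\}$ one has $p^2\le 8p$, so a Sylow $p$-subgroup of $G$ may genuinely have an orbit of size $p^2$, the clean collapse fails, and exactly these cases demand the separate, more delicate treatment carried out elsewhere in the paper.
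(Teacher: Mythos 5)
Your proof is correct. The paper itself does not reprove this lemma --- it imports it verbatim from \cite{Dobson2010a} --- so the only in-paper points of comparison are Lemma~\ref{pbig} (stated without proof as ``analogous'' to the cited argument) and the Sylow argument closing Corollary~\ref{5tool}. Your argument is exactly the right one, and in fact it proves Lemma~\ref{pbig} in full generality: for $H$ abelian of order $\ell p$ with $\ell<p$, the bound $p^2>\ell p$ forces every orbit of a Sylow $p$-subgroup $Q$ of $\la H_L,\phi^{-1}H_L\phi\ra$ to have size exactly $p$, so the orbit partition of $Q$ coincides with that of every semiregular $p$-subgroup it contains, and Sylow conjugacy then aligns the two characteristic $\Z_p$-parts onto this common invariant partition. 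One refinement of yours is worth highlighting: in Corollary~\ref{5tool} the paper first establishes that the Sylow $p$-subgroup of the whole group has order $p$ and then conjugates $\phi^{-1}\la\rho\ra\phi$ onto $\la\rho\ra$ itself, whereas you only place both $p$-parts inside a common Sylow subgroup $Q$ and observe that their orbit partitions agree without needing $\vert Q\vert=p$ or $P''=P$. That extra care is genuinely necessary for $p\ge 11$, since there $Q$ can well be larger than $\Z_p$ (e.g.\ when $\la H_L,\phi^{-1}H_L\phi\ra$ contains $\Z_p\wr\Sym(8)$ in its imprimitive action), so a naive transcription of the ``unique Sylow subgroup'' step from Corollary~\ref{5tool} would not go through. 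Your closing remark correctly locates why $p\in\{3,5,7\}$ escapes this argument and requires the heavier machinery of Lemmas~\ref{57tool} and~\ref{altlem}.
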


In particular, to prove that $\Z_2^3\times \Z_5$ is a CI-group with respect to color ternary relational structures, it suffices to prove that Lemma~\ref{refine} holds true also for the prime $p=5$. We begin with some intermediate results which accidentally will also help us to prove that $\Z_2^3\times\Z_7$ is a CI-group with respect to color binary relational structures. (Here we denote by $\Alt(X)$ the alternating group on the set $X$ and by $\Alt(n)$ the alternating group on $\{1,\ldots,n\}$.)

\begin{lem}\label{dtlemma}
Let $P_1$ and $P_2$ be partitions of $\Z_n$ where each block in $P_1$ and $P_2$ has order $p\geq 2$.  Then there exists $\phi\in \Alt(\Z_n)$ such that $\phi(P_1) = P_2$.
\end{lem}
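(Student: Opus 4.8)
The plan is to split the construction into two independent steps: first produce \emph{some} permutation of $\Z_n$ carrying $P_1$ onto $P_2$, and then correct its parity without changing the effect on the blocks. Since both $P_1$ and $P_2$ partition the same set $\Z_n$ into blocks of the common size $p$, they have the same number $s = n/p$ of blocks. I would therefore fix an arbitrary bijection between the blocks of $P_1$ and the blocks of $P_2$, and for each block an arbitrary bijection of its $p$ elements onto the $p$ elements of the corresponding block of $P_2$. Assembling these block bijections yields a permutation $\phi_0\in\Sym(\Z_n)$ with $\phi_0(P_1) = P_2$. This settles the statement except for the parity requirement.

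If $\phi_0$ already lies in $\Alt(\Z_n)$, we are done. Otherwise I would post-compose $\phi_0$ with a carefully chosen odd permutation that stabilizes the partition $P_1$. The key observation is that the set-wise stabilizer of $P_1$ in $\Sym(\Z_n)$ contains every transposition of two elements lying in a common block of $P_1$; such a transposition exists precisely because each block has size $p\ge 2$. Fix one such transposition $\psi$, so that $\psi(P_1) = P_1$ and $\psi$ is odd. Using the right-to-left composition convention of the paper, set $\phi := \phi_0\psi$. Then for each block $B\in P_1$ we have $\phi(B) = \phi_0(\psi(B))$, and since $\psi$ permutes the blocks of $P_1$ among themselves, the images $\phi_0(\psi(B))$ run over all of $\phi_0(P_1) = P_2$; hence $\phi(P_1) = P_2$. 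Moreover $\phi$ is a product of two odd permutations, so it is even, i.e. $\phi\in\Alt(\Z_n)$, which is exactly what is required.

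I do not expect a serious obstacle here: the whole content is the elementary parity correction, and the only fact to verify is that the partition stabilizer of $P_1$ is not contained in $\Alt(\Z_n)$. That is precisely where the hypothesis $p\ge 2$ is used, since it guarantees a transposition supported inside a single block and hence an odd permutation fixing the partition $P_1$. One should only take care with the composition order when checking $\phi(P_1)=P_2$, choosing the side on which $\psi$ acts so that it is the partition-preserving factor applied first; with that in place the argument goes through verbatim.
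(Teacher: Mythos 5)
Your proof is correct, but it takes a genuinely different route from the paper's. The paper invokes the $(n-2)$-transitivity of $\Alt(n)$: since the first $n/p-1$ blocks of $P_1$ contain only $n-p\le n-2$ points (this is where $p\ge 2$ enters there), one can prescribe inside $\Alt(n)$ the images of all these points so that $\phi(\Delta_i)=\Omega_i$ for $i<n/p$, and the last block then goes to the last block automatically because partitions are exhausted. Your argument instead builds an arbitrary block-respecting bijection $\phi_0$ and performs a parity correction by an odd element of the stabilizer of $P_1$, namely a transposition supported in a single block --- which is where $p\ge 2$ enters for you. Both are elementary and complete; yours has the mild advantage of not needing the multiple transitivity of $\Alt(n)$ (only that the partition stabilizer contains an odd permutation), and it makes the role of the hypothesis $p\ge 2$ more transparent, while the paper's is shorter to state. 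Your care about the composition order is warranted but handled correctly: $\psi$ stabilizes $P_1$ and is applied first, so $\phi_0\psi$ still carries $P_1$ onto $P_2$ and is even.
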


\begin{proof}
Let $P_1 = \{\Delta_1,\ldots,\Delta_{n/p}\}$ and $P_2 = \{\Omega_1,\ldots,\Omega_{n/p}\}$. As $\Alt(n)$ is $(n-2)$-transitive, there exists $\phi\in \Alt(n)$ such that $\phi(\Delta_i) = \Omega_i$, for $i\in\{1,\ldots, n/p - 1\}$.  As both $P_1$ and $P_2$ are partitions, we see that $\phi(\Delta_{n/p}) = \Omega_{n/p}$ as well.
\end{proof}

\begin{lem}\label{altlem}
Let $n = 8p$, $G = (\Z_2^3\times\Z_p)_L$ and $\delta\in \Sym(n)$. Suppose that $\la G,\delta^{-1}G\delta\ra$ admits a complete block system ${\cal C}$ with $p$ blocks of size $8$ such that $\Alt(C)\le\Stab_{\la G,\delta^{-1}G\delta\ra}(C)\vert_C$, where $C\in{\cal C}$.  Then there exists $\gamma\in\la G,\delta^{-1}G\delta\ra$ such that $\la G,\gamma^{-1}\delta^{-1}G\delta\gamma\ra$ admits a complete block system ${\cal B}$ with $4p$ blocks of size $2$.
\end{lem}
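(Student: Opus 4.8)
The plan is to realise the desired block system ${\cal B}$ as the orbits of a single involution that lies simultaneously in $G$ and in a suitable $M$-conjugate of $H:=\delta^{-1}G\delta$, where I write $M=\la G,H\ra$ and $W=\fix_M({\cal C})$. First I would record the structure of ${\cal C}$. Since $G$ is regular and abelian, a complete block system with $p$ blocks of size $8$ must consist of the cosets of the unique subgroup of order $8$ of $\Z_2^3\times\Z_p$, namely its Sylow $2$-subgroup $Q\cong\Z_2^3$; hence $\fix_G({\cal C})=Q$ and $Q$ acts on each block $C\in{\cal C}$ as a regular elementary abelian $2$-group, so every involution of $Q$ acts on $C$ as a product of four transpositions. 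As ${\cal C}$ is $M$-invariant it is also a block system for $H$, and the same reasoning gives $Q_H=\fix_H({\cal C})\cong\Z_2^3$ with the analogous property. Both $Q$ and $Q_H$ lie in $W$.

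Next I would control the kernel $W$ blockwise. Since $W\tl M$ and $W\le\Stab_M(C)$, restriction to $C$ gives $W|_C\tl\Stab_M(C)|_C$, where by hypothesis $\Alt(C)\le\Stab_M(C)|_C\le\Sym(C)$, so $\Stab_M(C)|_C$ is $\Alt(C)$ or $\Sym(C)$. As $W|_C\ne 1$ (it contains $Q|_C$), the classification of normal subgroups of these two groups forces $\Alt(C)\le W|_C$. Setting $W_0=\{w\in W:w|_C\in\Alt(C)\text{ for all }C\in{\cal C}\}$, I get $W_0\tl M$, each restriction $W_0|_C=\Alt(C)$, and $W_0$ is a subdirect product of $\prod_{C}\Alt(C)\cong\Alt(8)^p$. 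Because $M$ is transitive on the $p$ blocks and $p$ is prime, the induced permutation of the simple direct factors is transitive on a prime number of points and so admits no nontrivial invariant partition; hence the Goursat data of $W_0$ is either trivial or total, i.e. either $W_0=\prod_C\Alt(C)$ or $W_0$ is a full diagonal subgroup isomorphic to $\Alt(8)$.

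Now I would produce the common involution. Fix involutions $\iota\in Q$ and $\iota''\in Q_H$; both lie in $W_0$, and each acts on a fixed block $C_0$ as an element of cycle type $2^4$. Since this cycle type has no part that is simultaneously odd and unrepeated, the corresponding $\Sym(8)$-class does not split in $\Alt(8)$, so $\iota|_{C_0}$ and $\iota''|_{C_0}$ are conjugate in $\Alt(C_0)=W_0|_{C_0}$. In the full-product case one conjugates independently on each block, and in the diagonal case a single conjugator in $\Alt(C_0)$ lifts coherently through the diagonal isomorphisms; in either case there is $\gamma\in W_0\le M$ with $\gamma^{-1}\iota''\gamma=\iota$. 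Then $\iota\in G$ while $\iota=\gamma^{-1}\iota''\gamma\in\gamma^{-1}H\gamma$, so $\iota$ is a common involution of the two regular abelian groups $G$ and $\gamma^{-1}H\gamma$. As $\la\iota\ra$ is a necessarily normal subgroup of order $2$ of each, its orbits form a complete block system ${\cal B}$ of $4p$ blocks of size $2$ for both $G$ and $\gamma^{-1}H\gamma$, hence for $N=\la G,\gamma^{-1}\delta^{-1}G\delta\gamma\ra$, which is exactly what is required.

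I expect the main obstacle to be the middle step: controlling $W$ as a subdirect product and upgrading blockwise conjugacy of $\iota$ and $\iota''$ to a single simultaneous conjugacy inside $M$. The primality of $p$, which rules out intermediate diagonals, together with the non-splitting of the $2^4$-class in $\Alt(8)$, is precisely what makes the lift of the conjugator possible; by contrast the identification of ${\cal C}$ with the Sylow $2$-coset system and the passage from a common involution to the block system ${\cal B}$ are routine facts about regular abelian groups and their coset block systems.
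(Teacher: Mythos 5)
Your proof is correct, but the technical core is different from the paper's. Both arguments aim at the same target --- produce $\gamma$ in the kernel $W=\fix_{\la G,\delta^{-1}G\delta\ra}({\cal C})$ so that the order-$2$ subgroup $\la\iota\ra\le\fix_G({\cal C})$ coincides with a conjugate of the order-$2$ subgroup $\la\iota''\ra\le\fix_{\delta^{-1}G\delta}({\cal C})$, and then take the orbits of this common normal subgroup --- but they get there by different mechanisms. The paper aligns the \emph{orbit partitions} of the two involutions block by block, using the $(n-2)$-transitivity of $\Alt(8)$ (its Lemma~\ref{dtlemma}), and controls coherence across blocks by an equivalence relation ``same action of the kernel,'' which relies on $\Alt(8)$ having a unique permutation representation of degree $8$; it then proceeds by induction over the equivalence classes and never uses that $p$ is prime. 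You instead pass to $W_0=\{w\in W: w\vert_C\in\Alt(C)\ \forall C\}$, invoke the standard decomposition of a subdirect product of nonabelian simple groups into full diagonals over a partition of the factors, use primality of $p$ to force that partition to be trivial or total, and then conjugate the involutions directly via the fact that the class $2^4$ does not split in $\Alt(8)$. Your route is shorter and replaces the paper's inductive bookkeeping by two clean structural facts; the paper's route avoids the conjugacy-class computation and the diagonal lemma, and would survive without primality of $p$ (your equivalence classes are exactly its diagonal parts, so the two arguments are ultimately seeing the same phenomenon). One step you assert without proof is that $W_0\vert_C=\Alt(C)$: this does not follow merely from $\Alt(C)\le W\vert_C$, since an element of $W$ restricting to a given even permutation on $C$ might restrict to an odd permutation on another block. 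It is, however, easily repaired: $[W,W]\le W_0$ and $[W,W]\vert_C=[W\vert_C,W\vert_C]\ge[\Alt(C),\Alt(C)]=\Alt(C)$ because $\Alt(8)$ is perfect, so indeed $\Alt(C)\le W_0\vert_C\le\Alt(C)$. With that line added, your argument is complete.
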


\begin{proof}
Clearly both $G$ and $\delta^{-1}G\delta$ are regular, and so both $\fix_G({\cal C})$ and $\fix_{\delta^{-1}G\delta}({\cal C})$ are semiregular of order $8$.  As $\Alt(8)$ is simple and as $\fix_{\la G,\delta^{-1}G\delta\ra}({\cal C})\vert_C\tl\Stab_{\la G,\delta^{-1}G\delta\ra}({\cal C})\vert_C$, we have that $\Alt(C)\le\fix_{\la G,\delta^{-1}G\delta\ra}({\cal C})\vert_C$, for every $C\in{\cal C}$.  Let $J\le\fix_G({\cal C})$ and
$K\le\fix_{\delta^{-1}G\delta}({\cal C})$ be both of order $2$. Fix $C_0\in{\cal C}$, and let ${\cal O}_1,\ldots,{\cal O}_4$ be the orbits of $J\vert_{C_0}$, and ${\cal O}_1',\ldots,{\cal O}_4'$ be the orbits of $K\vert_{C_0}$. By Lemma~\ref{dtlemma}, there exists $\gamma_0\in\fix_{\la G,\delta^{-1}G\delta\ra}({\cal C})$ such
that $\gamma_0^{-1}({\cal O}_i') = {\cal O}_i$, for each $i\in\{1,\ldots,4\}$.  Hence the orbits of $J\vert_{C_0}$ and $(\gamma_0^{-1}K\gamma_0)\vert_{C_0}$ are identical.

Recall that two  transitive actions are equivalent if and only if the stabilizer of a point in one action is the same as the stabilizer of a point in the other \cite[Lemma~1.6B]{DixonM1996}. Suppose now that the action  of $\fix_{\la G,\delta^{-1}G\delta\ra}({\cal
C})$ on $C_0$ is equivalent to the action of $\fix_{\la G,\delta^{-1}G\delta\ra}({\cal C})$ on $C\in{\cal C}$.  Let $\omega_J$ generate $J$ and let $\omega_K$ generate $K$.  As the orbits of $J\vert_{C_0}$ and $(\gamma_0^{-1}K\gamma_0)\vert_{C_0}$ are identical and $|\omega_J|=|\omega_K|=2$, we see that $\omega_J\vert_{C_0} = (\gamma_0^{-1}\omega_K\gamma_0)\vert_{C_0}$.  Hence $(\omega_J\gamma_0^{-1}\omega_K\gamma_0)\vert_{C_0} = 1$ and so $(\omega_J\gamma_0^{-1}\omega_K\gamma_0)\vert_{C} = 1$. Therefore the orbits of $J\vert_{C}$ and $(\gamma_0^{-1}K\gamma_0)\vert_{C}$ are identical.

Define an equivalence relation $\equiv$ on ${\cal C}$ by $C\equiv C'$ if and only if the action of $\fix_{\la G,\delta^{-1}G\delta\ra}({\cal C})$ on $C$ is equivalent to the action of $\fix_{\la G,\delta^{-1}G\delta\ra}({\cal C})$ on $C'$.  Since $\Alt(8)$ has only one permutation representation of degree $8$~\cite[Theorem~5.3]{Cameron1981}, we obtain that $C\not\equiv C'$ if and only if the action of $\fix_{\la G,\delta^{-1}G\delta\ra}({\cal C})\vert_{C\cup C'}$ on $C'$ is not faithful.  Thus $C\not \equiv C'$ if and only if there exists $\alpha\in\fix_{\la G,\delta^{-1}G\delta\ra}({\cal C})$ such that $\alpha\vert_C =
1$ but $\alpha\vert_{C'}\not = 1$.

Let $E_0$ be the $\equiv$-equivalence class containing $C_0$ and set $$L_1 = \{\alpha\in\fix_{\la G,\delta^{-1}G\delta\ra}({\cal C}):\alpha\vert_{C} = 1 \textrm{ for every }C\in E_0\}.$$  Let  $C_1$ be in ${\cal C}$ with $C_1\not\equiv C_0$ and let $E_1$ be the $\equiv$-equivalence class containing $C_1$. Then there exists $\omega\in\fix_{\la G,\delta^{-1}G\delta\ra}({\cal C})$ with $\omega\vert_{C_0} = 1$ and $\omega\vert_{C_1}\not = 1$. From the definition of $\equiv$, we see that $\omega\vert_C = 1$, for every $C\in E_0$, that is, $\omega\in L_1$  and $L_1\not = 1$. As $L_1\tl\fix_{\la G,\delta^{-1}G\delta\ra}({\cal C})$ and $\Alt(8)$ is simple, we conclude that $\Alt(C_1)\le L_1\vert_{C_1}$.

As both $J$ and $K$ are semiregular of order $2$, the groups $J\vert_{C_1}$ and $(\gamma_0^{-1}K\gamma_0)\vert_{C_1}$ are generated by even permutations. So $J\vert_{C_1}\le L_1\vert_{C_1}$ and $(\gamma_0^{-1} K\gamma_0)\vert_{C_1}\le L_1\vert_{C_1}$.  By Lemma~\ref{dtlemma}, there exists $\gamma_1\in L_1$ such that the orbits of $J\vert_{C_1}$ and $(\gamma_1^{-1}\gamma_0^{-1}K\gamma_0\gamma_1)\vert_{C_1}$ are identical. In particular, the orbits of $J\vert_{C}$ and $(\gamma_1^{-1}\gamma_0^{-1}K\gamma_0\gamma_1)\vert_{C}$ are identical, for every $C\in E_1$.  Furthermore, as $L_1\vert_C = 1$ for every $C\in E_0$, we have that the orbits $J\vert_C$ and $(\gamma_1^{-1}\gamma_0^{-1}K\gamma_0\gamma_1)\vert_C$ are identical for every $C\in E_0\cup E_1$.

Applying inductively the previous two paragraphs to the various $\equiv$-equivalence classes, we find $\gamma\in\la G,\delta^{-1}G\delta\ra$ such that the orbits of $J$ and $(\gamma^{-1}\delta^{-1}K\delta\gamma)$ are identical. Since $|J|=2$, we get $J=\gamma^{-1}\delta^{-1}K\delta\gamma$. As $J\tl G$ and $\gamma^{-1}\delta^{-1}K\delta\gamma\tl\gamma^{-1}\delta^{-1}G\delta\gamma$, we obtain $J\tl\la G,\gamma^{-1}\delta^{-1}G\delta\gamma\ra$ and the orbits of $J$ form a complete block system for $\la G,\gamma^{-1}\delta^{-1}G\delta\gamma\ra$  of $4p$ blocks of size $2$.
\end{proof}

The proof of the following result is analogous to the proof of \cite[Lemma 6.1]{Dobson2010a}.

\begin{lem}\label{pbig}
Let $H$ be an abelian group of order $\ell p$, where $\ell < p$ and $p$ is prime.  Let $\phi\in \Sym(H)$.  Then there exists $\delta\in \la H_L,\phi^{-1}H_L\phi\ra$ such that $\la H_L,\delta^{-1}\phi^{-1}H_L\phi\delta\ra$ admits a complete block system with blocks of size $p$.
\end{lem}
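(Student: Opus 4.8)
The plan is to work inside $G=\la H_L,\phi^{-1}H_L\phi\ra$ and use Sylow theory together with the hypothesis $\ell<p$ to force a common system of orbits. First I would record the relevant order-$p$ subgroups. Since $p$ is prime and $\ell<p$, we have $p\nmid\ell$, so $H$ has a Sylow $p$-subgroup $P\cong\Z_p$; being the unique Sylow $p$-subgroup of the abelian group $H$, it is characteristic. In the left regular representation this yields $P_1:=P_L\tl H_L$, which is semiregular of order $p$ and hence has exactly $\ell$ orbits, each of size $p$; these orbits are the cosets of $P$ and form a complete block system for $H_L$. Applying the same reasoning to $\phi^{-1}H_L\phi$ (again regular and isomorphic to $H$) produces a characteristic $P_2\tl\phi^{-1}H_L\phi$ of order $p$ whose $\ell$ orbits form a complete block system for $\phi^{-1}H_L\phi$.

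The heart of the argument, and the step where $\ell<p$ is essential, is the following orbit claim. Let $S$ be any Sylow $p$-subgroup of $G$, so that $|S|$ is a power of $p$ and every $S$-orbit has size a power of $p$. Since the orbit sizes sum to $|H|=\ell p$ while $\ell<p$ excludes an orbit of size $p^2$ or larger, every $S$-orbit has size $1$ or $p$. Now choose $S$ to contain $P_1$. Because $P_1$ is fixed-point-free with all orbits of size $p$, and each $P_1$-orbit lies inside an $S$-orbit of size $1$ or $p$, each $P_1$-orbit must coincide with the $S$-orbit containing it; hence $S$ has no fixed points and the orbits of $S$ are exactly the orbits of $P_1$.

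I would then use Sylow conjugacy to transport $P_2$. Let $S'$ be a Sylow $p$-subgroup of $G$ containing $P_2$; by Sylow's theorem there is $\delta\in G$ with $\delta^{-1}S'\delta=S$, so that $\delta^{-1}P_2\delta\le S$. The conjugate $\delta^{-1}P_2\delta$ is again semiregular of order $p$ with $\ell$ orbits of size $p$, and each such orbit, lying in an $S$-orbit of equal size, must equal it; thus $\delta^{-1}P_2\delta$ and $S$, and therefore $\delta^{-1}P_2\delta$ and $P_1$, have identical orbits. Finally, this common partition ${\cal B}$ into $\ell$ blocks of size $p$ is preserved by $H_L$ (it is the orbit partition of the normal subgroup $P_1\tl H_L$) and by $\delta^{-1}\phi^{-1}H_L\phi\delta$ (it is the orbit partition of the normal subgroup $\delta^{-1}P_2\delta$). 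Being invariant under both generating subgroups, ${\cal B}$ is a complete block system for $\la H_L,\delta^{-1}\phi^{-1}H_L\phi\delta\ra$ with blocks of size $p$, as required.

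I expect the main obstacle to be the orbit claim: one must observe that $\ell<p$ is exactly the condition preventing $p^2$ from dividing any orbit length, which is what collapses every relevant $p$-subgroup of $G$ onto the single coset partition determined by $P$. Once this is in place, the remainder is routine Sylow conjugacy together with the standard fact that the orbits of a normal subgroup of a transitive group form a complete block system.
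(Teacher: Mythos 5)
Your proof is correct, and it is essentially the argument the paper intends: the paper gives no proof of this lemma, deferring to \cite[Lemma 6.1]{Dobson2010a}, and the technique there (echoed in the proof of Corollary~\ref{5tool}) is exactly your Sylow-theoretic one. The key observation --- that $\ell<p$ forces every orbit of a Sylow $p$-subgroup of $\la H_L,\phi^{-1}H_L\phi\ra$ to have size $1$ or $p$, so that such a Sylow subgroup containing the semiregular $P_1$ has the same orbits as $P_1$, after which Sylow conjugacy aligns $P_2$ with $P_1$ --- is correctly identified and correctly executed.
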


\begin{lem}\label{57tool}
Let $p \ge 5$, $H = \Z_2^3\times\Z_p$,  and $\phi\in \Sym(H)$. Then either there exists $\delta\in \la H_L,\phi^{-1}H_L\phi\ra$ such that $\la H_L,\delta^{-1}\phi^{-1}H_L\phi\delta\ra$ admits a complete block system with blocks of size $p$ or $ \la H_L,\phi^{-1}H_L\phi\ra$ admits a complete block system ${\cal B}$ with blocks of size $8$ and $\fix_K({\cal B})\vert_B$ is isomorphic to a primitive subgroup of $\AGL(3,2)$, for $B\in {\cal B}$.
\end{lem}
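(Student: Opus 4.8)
Set $K=\la H_L,\phi^{-1}H_L\phi\ra$. The plan is to analyse this transitive group of degree $8p$ through its Sylow $p$-subgroups. Both $H_L$ and $\phi^{-1}H_L\phi$ are regular abelian subgroups of $K$ of order $8p$, and the Sylow $p$-subgroup of each is its unique (hence characteristic) subgroup of order $p$: namely $(\Z_p)_L$ for $H_L$ and $\phi^{-1}(\Z_p)_L\phi$ for $\phi^{-1}H_L\phi$, where $\Z_p$ is the second direct factor of $H$. Each of these is semiregular with $8$ orbits of size $p$. By Sylow's theorem there is $\delta\in K$ with $(\Z_p)_L$ and $Q:=\delta^{-1}\phi^{-1}(\Z_p)_L\phi\delta$ contained in a common Sylow $p$-subgroup $P$ of $K$; I put $M=\la H_L,\delta^{-1}\phi^{-1}H_L\phi\delta\ra$ and $R=\la(\Z_p)_L,Q\ra\le P\cap M$, and distinguish cases according to the orbits of the $p$-group $R$.

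In the generic case every orbit of $R$ has size $p$. Then $R$ fixes each of its eight orbits setwise and acts on each as a transitive $p$-subgroup of $\Sym(p)\cong\Z_p$, so $R$ embeds in $\Z_p^{8}$ and is elementary abelian. Since $(\Z_p)_L\le R$, the orbit partition of $(\Z_p)_L$ refines that of $R$; both are partitions into eight parts of size $p$, so they coincide, and likewise the orbit partition of $Q$ equals that of $R$. As $H_L$ and $\delta^{-1}\phi^{-1}H_L\phi\delta$ are abelian, each normalises its characteristic Sylow $p$-subgroup and therefore preserves this common partition, which is thus a complete block system for $M$ with blocks of size $p$: the first alternative holds. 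An $R$-orbit of size exceeding $p$ would force $p^{2}\le 8p$, so this case occurs for every prime $p\ge 11$ (recovering Lemma~\ref{pbig} for $H$), and only $p\in\{5,7\}$ can escape it.

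When the first alternative fails for every choice of $\delta$, I claim $K$ itself admits a complete block system ${\cal B}$ with $p$ blocks of size $8$. Granting this, note that $H_L$ acts on the $p$ blocks as a transitive abelian group, so $\fix_{H_L}({\cal B})$ has order $8$ and, being a subgroup of the regular group $H_L$, acts regularly as $\Z_2^3$ on each block. Hence the induced action $\fix_K({\cal B})\vert_B$ on a block $B$ is a transitive group of degree $8$ containing a regular $\Z_2^3$; such a group either preserves a system of blocks of size $2$ or $4$ inside $B$ (producing finer block systems of $K$, to be treated separately) or is primitive. By inspection of the primitive groups of degree $8$, those containing a regular $\Z_2^3$ are exactly the primitive subgroups of $\AGL(3,2)$ together with $\Alt(8)$ and $\Sym(8)$ (the groups $\PSL(2,7)$ and $\PGL(2,7)$ are excluded, their Sylow $2$-subgroups containing no $\Z_2^3$). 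If $\fix_K({\cal B})\vert_B$ is a primitive subgroup of $\AGL(3,2)$ we are in the second alternative; if instead $\Alt(8)\le\fix_K({\cal B})\vert_B$, then $\Alt(B)\le\Stab_K(B)\vert_B$, so Lemma~\ref{altlem} applies and replaces $K$ by a conjugate admitting blocks of size $2$, reducing to the analogous problem for $\Z_2^2\times\Z_p$ in lower degree.

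The main obstacle is the middle step: establishing the block system of size $8$ exactly when no conjugation yields blocks of size $p$, and doing so uniformly in $p$. For $p\in\{5,7\}$ the Sylow $p$-subgroup of $K$ need not have all orbits of size $p$, so the clean normaliser argument of the generic case is unavailable, and one must instead control the primitive quotients of $K$ of degree dividing $8p$. The delicate points are to show that $K$ cannot be primitive of degree $8p$ without the first alternative already holding (via conjugating the two subgroups of order $p$ so that they share an orbit partition), then to force an imprimitivity system with blocks of size $8$, and finally to pin its block action down to an affine group, the almost simple possibility $\Alt(8)\le\fix_K({\cal B})\vert_B$ being disposed of through Lemma~\ref{altlem}.
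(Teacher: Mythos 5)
Your generic case (all orbits of $R=\la(\Z_p)_L,Q\ra$ of size $p$, forcing the two orbit partitions to coincide and yield blocks of size $p$) is correct, and it rightly isolates $p\in\{5,7\}$ as the only primes where the lemma has real content. But for those primes the proof stops exactly where the lemma begins: you write ``I claim $K$ itself admits a complete block system ${\cal B}$ with $p$ blocks of size $8$'' and then concede in your final paragraph that establishing this claim is ``the main obstacle.'' That claim \emph{is} the lemma. Nothing in the proposal rules out $K$ being primitive of degree $8p$, nor does it show that an arbitrary nontrivial block system of $K$ can be traded for one with blocks of size $8$ or of size $p$; the concluding identification of $\fix_K({\cal B})\vert_B$ also defers the imprimitive subcase (blocks of size $2$ or $4$ inside $B$) to an unspecified separate treatment. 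So the argument is a correct reduction of the problem to itself, not a proof.

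For comparison, the paper fills these holes with specific external inputs. Primitivity of $K$ is excluded via \cite[Theorem~3.5A]{DixonM1996} (a transitive group of degree $8p$ with cyclic Sylow $p$-subgroup is doubly transitive or imprimitive), and the doubly transitive case is killed by \cite[Theorem~1.1]{Li2003} ($\Alt(H)\le K$) together with Lemma~\ref{dtlemma}, which conjugates one partition into eight parts of size $p$ onto another inside $\Alt(H)$. The remaining block sizes are then handled one by one: blocks of size $2p$ or $4p$ are refined to size $p$ by applying Lemma~\ref{pbig} block by block and gluing via the uniqueness of the subgroup of order $p$ in $\fix_{H_L}({\cal C})\vert_C$; blocks of size $2$ or $4$ are handled by passing to the quotient $K/{\cal D}$ of degree $2p$ or $4p$ and applying Lemma~\ref{pbig} there; and for blocks of size $8$ one first uses the CI-property of $\Z_p$ to arrange $K/{\cal C}=H_L/{\cal C}$ before analysing $\Stab_K({\cal C})\vert_C$ (imprimitive, almost simple via Lemma~\ref{altlem}, or affine via \cite[Theorem~1.1]{Li2003}). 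Your endgame for the size-$8$ blocks (listing the primitive groups of degree $8$ with a regular $\Z_2^3$ and invoking Lemma~\ref{altlem} when $\Alt(8)$ appears) matches the paper, but without the Burnside--Schur dichotomy, Li's classification, and the Lemma~\ref{pbig} bookkeeping for the intermediate block sizes, the lemma is not proved for $p=5$ or $7$.
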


\begin{proof}
Set $K=\la H_L,\phi^{-1}H_L\phi\ra$. As $H$ has a cyclic Sylow $p$-subgroup, we have by~\cite[Theorem 3.5A]{DixonM1996} that $K$ is doubly-transitive or imprimitive.  If $K$ is doubly-transitive, then by~\cite[Theorem 1.1]{Li2003} we have that $\Alt(H)\le K$. Now Lemma~\ref{dtlemma} reduces this case to the imprimitive case.  Thus we may assume that $K$ is imprimitive with a complete block system ${\cal C}$.

Suppose that the blocks of ${\cal C}$ have size $\ell p$, where $\ell = 2$ or $4$.  Notice  that $p > \ell$.  As $H$ is abelian, $\fix_{H_L}({\cal C})$ is a semiregular group of order $\ell p$ and $\fix_{\phi^{-1}H_L\phi}({\cal C})$ is also a semiregular group of order $\ell p$.  Then, for $C\in{\cal C}$, both $\fix_{H_L}({\cal C})\vert_C$ and $\fix_{\phi^{-1}H_L\phi}({\cal C})\vert_C$ are regular groups of order $\ell p$.  Let $C\in{\cal C}$.  By Lemma \ref{pbig}, there exists $\delta\in \la\fix_{H_L}({\cal C}),\fix_{\phi^{-1}H_L\phi}({\cal C})\ra$ such that $\la\fix_{H_L}({\cal C}),\fix_{\delta^{-1}\phi^{-1}H_L\phi\delta}({\cal C})\ra\vert_{C}$ admits a complete block system ${\cal B}_{C}$ consisting of blocks of size $p$.  Let $C'\in{\cal C}$ with $C'\not = C$.  Arguing as above, there exists $\delta'\in \la\fix_{H_L}({\cal C}),\fix_{\delta^{-1}\phi^{-1}H_L\phi\delta}({\cal C})\ra$ such that $\la\fix_{H_L}({\cal C}),\fix_{\delta'^{-1}\delta^{-1}\phi^{-1}H_L\phi\delta\delta'}({\cal C})\ra\vert_{C'}$ admits a complete block system ${\cal B}_{C'}$ consisting of blocks of size $p$.  Note that $\delta'\vert_{C}\in\la\fix_{H_L}({\cal C}),\fix_{\delta^{-1}\phi^{-1}H_L\phi\delta}({\cal C})\ra\vert_{C}$ and so  $\la\fix_{H_L}({\cal C}),\fix_{\delta'^{-1}\delta^{-1}\phi^{-1}H_L\phi\delta\delta'}({\cal C})\ra\vert_{C}$ admits ${\cal B}_{C'}$ as a complete block system.  Repeating this argument for every block in ${\cal C}$, we find $\delta\in \la\fix_{H_L}({\cal C}),\fix_{\phi^{-1}H_L\phi}({\cal C})\ra$ such that $\la\fix_{H_L}({\cal C}),\fix_{\delta^{-1}\phi^{-1}H_L\phi\delta}({\cal C})\ra\vert_{C}$ admits a complete block system ${\cal B}_C$ consisting of blocks of size $p$.  Let ${\cal B} = \cup_{C}{\cal B}_C$.  We claim that ${\cal B}$ is a complete block system for $\la H_L,\delta^{-1}\phi^{-1}H_L\phi\delta\ra$, which will complete the argument in this case.

Let $\rho\in H_L$ be of order $p$.  By construction, $\rho\in\fix_{H_L}({\cal B})$. As $H$ is abelian, $\fix_{H_L}({\cal C})\vert_{C}$ is abelian, for every $C\in{\cal C}$.  Then ${\cal B}_C$ is formed by the orbits of some subgroup of $\fix_{H_L}({\cal C})\vert_{C}$ of order $p$, and as $\la\rho\ra\vert_{C}$ is the unique subgroup of $\fix_{H_L}({\cal C})\vert_{C}$ of order $p$, we obtain that ${\cal B}_C$ is formed by the orbits of $\la\rho\ra\vert_{C}$.  Then ${\cal B}$ is formed by the orbits of $\la\rho\ra\tl H_L$ and ${\cal B}$ is a complete block system for $H_L$.  An analogous argument for $\delta^{-1}\phi^{-1}\la\rho\ra\phi\delta$ gives that ${\cal B}$ is  a complete block system for $\delta^{-1}\phi^{-1}H_L\phi\delta$.  Then ${\cal B}$ is a complete block system for $\la H_L,\delta^{-1}\phi^{-1}H_L\phi\delta\ra$ with blocks of size $p$, as required.

Suppose that the blocks of ${\cal C}$ have size $8$.  Now $H_L/{\cal C}$ and $\phi^{-1}H_L\phi/{\cal C}$ are cyclic  of order $p$, and as $\Z_p$ is a CI-group \cite[Theorem 2.3]{Babai1977}, replacing $\phi^{-1}H_L\phi$ by a suitable conjugate, we may assume that $\la H_L,\phi^{-1}H_L\phi\ra/{\cal C} = H_L/{\cal C}$.  Then $K/{\cal C}$ is regular and $\Stab_K(C) = \fix_K({\cal C})$, for every $C\in{\cal C}$.

Suppose that $\Stab_K({\cal C})\vert_C$ is imprimitive, for $C\in{\cal C}$.  By \cite[Exercise 1.5.10]{DixonM1996}, the group $K$ admits a complete block system ${\cal D}$ with blocks of size $2$ or $4$.  Then $K/{\cal D}$ has degree $2p$ or $4p$ and, by Lemma~\ref{pbig}, there exists $\delta\in K$ such that $\la H_L,\delta^{-1}\phi^{-1}H_L\phi\delta\ra/{\cal D}$ admits a complete block system ${\cal B}'$ with blocks of size $p$.  In particular, ${\cal B}'$ induces a complete block system ${\cal B}''$ for $\la H_L,\delta^{-1}\phi^{-1}H_L\phi\delta\ra$ with blocks of size $2p$ or $4p$, and we conclude by the case previously considered applied with $\mathcal{C}={\cal B}''$.  Suppose that $\Stab_K({\cal C})\vert_C$ is primitive, for $C\in{\cal C}$. If $\Stab_K({\cal C})\vert_C\ge \Alt(C)$, then the result follows by Lemma~\ref{altlem}, and so we may assume this is not the case.  By~\cite[Theorem 1.1]{Li2003}, we see that $\Stab_K({\cal C})\vert_C\le\AGL(3,2)$.  The result now follows with ${\cal B} = {\cal C}$.
\end{proof}

\begin{cor}\label{5tool}
Let $H = \Z_2^3\times\Z_5$ and $\phi\in \Sym(H)$.  Then there exists $\delta\in \la H_L,\phi^{-1}H_L\phi\ra$ such that $\la H_L,\delta^{-1}\phi^{-1}H_L\phi\delta\ra$ admits a complete block system with blocks of size $5$.
\end{cor}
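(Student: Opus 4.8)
The plan is to invoke Lemma~\ref{57tool} with $p=5$ and then eliminate its exceptional second alternative by a Sylow-theoretic argument. Writing $K=\la H_L,\phi^{-1}H_L\phi\ra$, Lemma~\ref{57tool} yields one of two conclusions. If some conjugate $\la H_L,\delta^{-1}\phi^{-1}H_L\phi\delta\ra$ (with $\delta\in K$) already admits a complete block system with blocks of size $5$, there is nothing left to prove. Otherwise $K$ admits a complete block system $\mathcal{B}$ with five blocks of size $8$ for which $\fix_K(\mathcal{B})\vert_B$ is isomorphic to a primitive subgroup of $\AGL(3,2)$ for each $B\in\mathcal{B}$, and this is the only case I need to treat.

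My main observation would be that this exceptional case forces $\vert K\vert_5=5$. Since $\AGL(3,2)$ is a $\{2,3,7\}$-group, every factor $\fix_K(\mathcal{B})\vert_B$ is a $\{2,3,7\}$-group, and because $\fix_K(\mathcal{B})$ embeds into the direct product $\prod_{B\in\mathcal{B}}\fix_K(\mathcal{B})\vert_B$ it too is a $\{2,3,7\}$-group. Meanwhile $K/\fix_K(\mathcal{B})$ acts faithfully and transitively on the five blocks, so it embeds into $\Sym(5)$ and has $5$-part exactly $5$. Hence $\vert K\vert_5=5$, and as $\vert H_L\vert_5=5$ this bound is attained.

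With the order of the Sylow $5$-subgroups identified, I would let $\rho$ generate the (unique, as $H_L$ is abelian) Sylow $5$-subgroup of $H_L$ and $\rho'$ generate that of $\phi^{-1}H_L\phi$. Both $\la\rho\ra$ and $\la\rho'\ra$ then have order $5=\vert K\vert_5$, so they are Sylow $5$-subgroups of $K$ and, by Sylow's theorem, conjugate: there is $\delta\in K$ with $\delta^{-1}\la\rho'\ra\delta=\la\rho\ra$. Since $H_L$ and $\phi^{-1}H_L\phi$ are abelian, $\la\rho\ra\tl H_L$ and $\delta^{-1}\la\rho'\ra\delta\tl\delta^{-1}\phi^{-1}H_L\phi\delta$; as these two normal subgroups are equal, $\la\rho\ra$ is normal in $\la H_L,\delta^{-1}\phi^{-1}H_L\phi\delta\ra$. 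Being semiregular of order $5$, its orbits partition $H$ into $8$ sets of size $5$, and the orbits of a normal subgroup always form a complete block system; this is the block system required.

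The step I expect to be the crux is the reduction in the exceptional case: recognizing that the structural constraint $\fix_K(\mathcal{B})\vert_B\le\AGL(3,2)$ pins $\vert K\vert_5$ down to $5$, which is precisely what promotes the two order-$5$ subgroups to Sylow subgroups and lets Sylow conjugacy finish the argument. The remaining verifications (normality in the abelian groups, and that the orbits of a normal subgroup are blocks) are routine.
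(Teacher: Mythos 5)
Your proposal is correct and follows essentially the same route as the paper: apply Lemma~\ref{57tool} with $p=5$, note that in the exceptional case the order of $\AGL(3,2)$ is coprime to $5$ so a Sylow $5$-subgroup of $K$ has order $5$, conjugate the Sylow $5$-subgroup of $\phi^{-1}H_L\phi$ onto that of $H_L$ by Sylow's theorem, and observe that the resulting common subgroup is normal in $\la H_L,\delta^{-1}\phi^{-1}H_L\phi\delta\ra$ so its orbits give the desired block system. The only cosmetic difference is that you spell out the $\{2,3,7\}$-group bookkeeping for $\fix_K(\mathcal{B})$ and the quotient acting on the five blocks, which the paper compresses into the single remark that $\vert\AGL(3,2)\vert=8\cdot7\cdot6\cdot4$.
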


\begin{proof}
Set $K=\la H_L,\phi^{-1}H_L\phi\ra$.
By Lemma~\ref{57tool}, we may assume  that $K$ admits a complete block system ${\cal B}$ with blocks of size $8$ and with $\Stab_K({\cal B})\vert_B\le\AGL(3,2)$, for $B\in {\cal B}$.  As $\vert\AGL(3,2)\vert = 8\cdot 7\cdot 6\cdot 4$, we see that a Sylow $5$-subgroup of $K$ has order $5$.  Let $\la\rho\ra$ be the  subgroup of $H_L$ of order $5$. So $\la\rho\ra$ is a Sylow $5$-subgroup of $K$.  Then $\phi^{-1}\la\rho\ra\phi$ is also a Sylow $5$-subgroup of $K$, and by a Sylow theorem there exists $\delta\in K$ such that $\delta^{-1}\phi^{-1}\la\rho\ra\phi\delta = \la\rho\ra$.  We then have that $\la H_L,\delta^{-1}\phi^{-1}H_L\phi\delta\ra$ has a unique Sylow $5$-subgroup, whose orbits form the required complete block system ${\cal B}$.
\end{proof}

We are finally ready to prove Theorem~A.
\begin{proof}[Proof of Theorem~A]
If $p$ is odd, then the paragraph following the proof of Corollary~\ref{coro1} shows that it suffices to prove that Lemma~\ref{refine} holds for the prime $p=5$. This is done in Corollary~\ref{5tool}. If $p=2$, then the result can be verified using GAP or Magma.
\end{proof}

\section{Proof of Corollaries~A and~B}
Before proceeding to our next result we will need the following definitions.

\begin{defin}\label{defin1}{\rm
Let $G$ be a permutation group on $\Omega$ and $k\geq 1$. A permutation $\sigma\in \Sym(\Omega)$ lies in the $k$-closure  $G^{(k)}$ of $G$
if for every $k$-tuple $t\in\Omega^k$ there exists $g_t\in G$
(depending on $t$) such that $\sigma(t) = g_t(t)$. We say that $G$ is
$k$-closed if the permutations lying in the $k$-closure of $G$
are the elements of $G$, that is, $G^{(k)}=G$. The group $G$ is $k$-closed if and only if there exists a color $k$-ary relational structure $X$ on $\Omega$ with  $G=\Aut(X)$, see~\cite{Wielandt1969}.}
\end{defin}

\begin{defin}
{\rm For color digraphs $\Gamma_1$ and $\Gamma_2$, we define the {\it wreath product of $\Gamma_1$ and $\Gamma_2$}, denoted $\Gamma_1\wr\Gamma_2$, to be the color digraph with vertex set $V(\Gamma_1)\times V(\Gamma_2)$ and edge set $E_1\cup E_2$, where  $E_1 =\{((x_1,y_1),(x_1,y_2)):x_1\in V(\Gamma_1), (y_1,y_2)\in E(\Gamma_2)\}$ and the edge $((x_1,y_1),(x_1,y_2))\in E_1$ is colored with the same color as $(y_1,y_2)$ in $\Gamma_2$, and $E_2 = \{((x_1,y_1),(x_2,y_2)):(x_1,x_2)\in E(\Gamma_1), y_1,y_2\in V(\Gamma_2)\}$ and the edge $((x_1,y_1),(x_2,y_2))
\in E_2$ is colored with the same color as $(x_1,x_2)$ in $\Gamma_1$.  }
\end{defin}

\begin{defin}
{\rm For permutation groups $G\le \Sym(X)$ and $H\le \Sym(Y)$, we define the {\it wreath product of $G$ and $H$}, denoted $G\wr H$, to be the permutation group $G\wr H\le \Sym(X\times Y)$ consisting of all permutations of the form $(x,y)\mapsto(g(x),h_x(y))$, $g\in G$, $h_x\in H$.}
\end{defin}

The following very useful result (see \cite[Lemma 3.1]{Babai1977}) characterizes CI-groups with respect to a class of combinatorial objects.

\begin{lem}\label{CItool}
Let $H$ be a group and let ${\cal K}$ be a class of combinatorial objects.  The following are equivalent.
\begin{enumerate}
\item $H$ is a CI-group with respect to ${\cal K}$,
\item whenever $X$ is a Cayley object of $H$ in ${\cal K}$ and $\phi\in \Sym(H)$ such that $\phi^{-1}H_L\phi\le\Aut(X)$, then $H_L$ and $\phi^{-1}H_L\phi$ are conjugate in $\Aut(X)$.
\end{enumerate}
\end{lem}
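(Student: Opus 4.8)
The plan is to prove the two implications separately, relying throughout on three elementary facts about the left regular representation. First, every group automorphism $\sigma\in\Aut(H)$, viewed as a permutation of $H$ fixing the identity $e$, normalizes $H_L$: a direct check gives $\sigma g_L\sigma^{-1}=(\sigma(g))_L$ for all $g\in H$, so $\sigma H_L\sigma^{-1}=H_L$. Second, if $Y=\psi(X)$ for some $\psi\in\Sym(H)$ (which is exactly what it means for two Cayley objects $X,Y$ on vertex set $H$ to be isomorphic), then $\Aut(Y)=\psi\,\Aut(X)\,\psi^{-1}$. Third, and this is the key structural input, the stabilizer of $e$ in the normalizer $N_{\Sym(H)}(H_L)$ consists precisely of the group automorphisms of $H$: if $\sigma\in N_{\Sym(H)}(H_L)$ fixes $e$, then writing $\sigma g_L\sigma^{-1}=(g')_L$ and evaluating at $e$ forces $g'=\sigma(g)$, whence $\sigma(gh)=\sigma(g)\sigma(h)$ and $\sigma\in\Aut(H)$. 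Consequently $N_{\Sym(H)}(H_L)=H_L\rtimes\Aut(H)$.

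For $(1)\Rightarrow(2)$, I suppose $H$ is a CI-group with respect to ${\cal K}$, let $X$ be a Cayley object of $H$ in ${\cal K}$, and let $\phi\in\Sym(H)$ satisfy $\phi^{-1}H_L\phi\le\Aut(X)$. I set $Y=\phi(X)$; then $\Aut(Y)=\phi\,\Aut(X)\,\phi^{-1}\ge H_L$, so $Y$ is again a Cayley object of $H$ in ${\cal K}$ and $X\cong Y$ via $\phi$. By the CI hypothesis there is $\sigma\in\Aut(H)$ with $\sigma(X)=Y=\phi(X)$, so $\beta:=\phi^{-1}\sigma$ fixes $X$, i.e.\ $\beta\in\Aut(X)$. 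Using that $\sigma$ normalizes $H_L$, I compute $\phi^{-1}H_L\phi=\phi^{-1}(\sigma H_L\sigma^{-1})\phi=\beta H_L\beta^{-1}$; since $H_L\le\Aut(X)$ (as $X$ is a Cayley object) and $\beta\in\Aut(X)$, this exhibits $\phi^{-1}H_L\phi$ as a conjugate of $H_L$ inside $\Aut(X)$, which is exactly~(2).

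For $(2)\Rightarrow(1)$, let $X,Y$ be isomorphic Cayley objects of $H$ in ${\cal K}$, say $Y=\psi(X)$ with $\psi\in\Sym(H)$. From $H_L\le\Aut(Y)=\psi\,\Aut(X)\,\psi^{-1}$ I get $\psi^{-1}H_L\psi\le\Aut(X)$, so taking $\phi=\psi$ the hypothesis~(2) applies and yields $\delta\in\Aut(X)$ with $\delta H_L\delta^{-1}=\psi^{-1}H_L\psi$. Then $\alpha:=\psi\delta$ satisfies $\alpha H_L\alpha^{-1}=H_L$, i.e.\ $\alpha\in N_{\Sym(H)}(H_L)$, and moreover $\alpha(X)=\psi\delta(X)=\psi(X)=Y$ because $\delta\in\Aut(X)$. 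Writing $g=\alpha(e)$ and using the decomposition $N_{\Sym(H)}(H_L)=H_L\rtimes\Aut(H)$, the permutation $\sigma:=g_L^{-1}\alpha$ fixes $e$ and lies in $N_{\Sym(H)}(H_L)$, hence $\sigma\in\Aut(H)$; finally $g_L\in H_L\le\Aut(Y)$ gives $\sigma(X)=g_L^{-1}\alpha(X)=g_L^{-1}(Y)=Y$, so $\sigma$ is the required group automorphism carrying $X$ to $Y$.

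The main obstacle is the third preliminary fact, the identification of the $e$-stabilizer of $N_{\Sym(H)}(H_L)$ with $\Aut(H)$: this is what lets one replace the arbitrary normalizing permutation $\alpha$ produced by hypothesis~(2) with a genuine group automorphism of $H$, and it is the only place where the regularity of $H_L$ is essential. Everything else is bookkeeping with the identities $\Aut(\psi(X))=\psi\,\Aut(X)\,\psi^{-1}$ and $\sigma H_L\sigma^{-1}=H_L$, together with the observation that $H_L\le\Aut(X)$ for every Cayley object $X$ of $H$. I would also record the mild standing assumption that ${\cal K}$ is closed under isomorphism of combinatorial objects, so that $Y=\phi(X)$ again belongs to ${\cal K}$; this is implicit in calling ${\cal K}$ a class of combinatorial objects.
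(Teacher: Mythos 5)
Your proof is correct and complete. Note that the paper offers no proof of this lemma at all---it is quoted directly from Babai's 1977 paper (Lemma 3.1 there)---and your argument is exactly the standard one: the only nontrivial input is the identification of the stabilizer of the identity in $N_{\Sym(H)}(H_L)$ with $\Aut(H)$, which you verify correctly, and you rightly flag the (implicit) assumption that ${\cal K}$ is closed under isomorphism so that $\phi(X)$ is again an object of the class.
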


\begin{proof}[Proof of Corollary~A]
From Theorem~A, it suffices to show that $\Z_2^3\times\Z_3$ and $\Z_2^3\times\Z_7$ are CI-groups with respect to color binary relational structures.
As the transitive permutation groups of degree $24$ are readily available in GAP or Magma, it can be shown using a computer that $\Z_2^3\times \Z_3$ is a CI-group with respect to color binary relational structures. It remains to consider $H=\Z_2^3\times \Z_7$.

Fix $\phi\in \Sym(H)$ and set $K=\la H_L,\phi^{-1}H_L\phi\ra$. Assume that there exists $\delta\in K$ such that $\la H_L,\delta^{-1}\phi^{-1}H_L\phi\delta\ra$ admits a complete block system with blocks of size $7$. Now, it follows by~\cite{Dobson2010a} (see the two paragraphs following the proof of Corollary~\ref{coro1}) that $H_L$ and $\delta^{-1}\phi^{-1}H_L\phi\delta$ are conjugate in $\la H_L,\delta^{-1}\phi^{-1}H_L\phi\delta\ra^{(3)}$. Since $\la H_L,\delta^{-1}\phi^{-1}H_L\phi\delta\ra^{(3)}\le \la H_L,\delta^{-1}\phi^{-1}H_L\phi\delta\ra^{(2)}$, the corollary follows from Lemma~\ref{CItool} (and from Definition~\ref{defin1}).

Assume that there exists no $\delta\in K$ such that $\la H_L,\delta^{-1}\phi^{-1}H_L\phi\delta\ra$ admits a complete block system with blocks of size $7$. By Lemma~\ref{57tool}, the group $K$ admits a complete block system ${\cal C}$ with blocks of size $8$ and $\fix_K({\cal C})\vert_C$ is isomorphic to a primitive subgroup of $\AGL(3,2)$, for $C\in {\cal C}$.  Suppose that $7$ and $\vert\fix_K({\cal C})\vert$ are relatively prime. So, a Sylow $7$-subgroup of $K$ has order $7$.  We are now in the position to apply the argument in the proof of Corollary~\ref{5tool}. Let $\la\rho\ra$ be the subgroup of $H_L$ of order $7$. Then $\phi^{-1}\la\rho\ra\phi$ is a Sylow $7$-subgroup of $K$, and by a Sylow theorem there exists $\delta\in K$ such that $\delta^{-1}\phi^{-1}\la\rho\ra\phi\delta = \la\rho\ra$.  We then have that $\la H_L,\delta^{-1}\phi^{-1}H_L\phi\delta\ra$ has a unique Sylow $7$-subgroup, whose orbits form a complete block system with blocks of size $7$, contradicting our hypothesis on $K$.
We thus assume that $7$ divides $|\fix_K({\cal C})|$ and so $\fix_K({\cal C})$ acts doubly-transitively on $C$, for $C\in {\cal C}$.

Fix $C\in {\cal C}$ and let $L$ be  the point-wise stabilizer of $C$ in  $\fix_K({\cal C})$. Assume that $L\not = 1$. Now, we compute $K^{(2)}$ and we deduce that $H_L$ and $\phi^{-1}H_L\phi$ are conjugate in $K^{(2)}$, from which the corollary will follow from Lemma~\ref{CItool}. As $L\tl \fix_K({\cal C})$, we have $L\vert_{C'}\tl \fix_K({\cal C})\vert_{C'}$, for every $C'\in{\cal C}$. As a nontrivial normal subgroup of a primitive group is transitive \cite[Theorem 8.8]{Wielandt1964}, either $L\vert_{C'}$ is transitive or $L\vert_{C'} = 1$.  Let $\Gamma$ be a Cayley color digraph on $H$ with $K^{(2)}=\Aut(\Gamma)$.  Let ${\cal C} = \{C_i:i\in\Z_7\}$ where $C_i = \{(x_1,x_2,x_3,i):x_1,x_2,x_3\in\Z_2\}$, and assume without loss of generality that $C = C_0$.  Suppose that there is an edge of color $\kappa$ from some vertex of $C_i$ to some vertex of $C_j$, where $i\not = j$.  Then there is an edge of color $\kappa$ from some vertex of $C_0$ to some vertex of $C_{j-i}$.  Additionally, $j - i$ generates $\Z_7$, so there is a smallest integer $s$ such that $L\vert_{C_{s(j-i)}} = 1$ while $L\vert_{C_{(s+1)(j-i)}}$ is transitive.  As there is an edge of color $\kappa$ from some vertex of $C_{s(j-i)}$ to some vertex of $C_{(s+1)(j-i)}$, we conclude that there is an edge of color $\kappa$ from every vertex of $C_{s(j-i)}$ to every vertex of $C_{(s+1)(j-i)}$.  This implies that there is an edge of color $\kappa$ from every vertex of $C_i$ to every vertex of $C_j$, and then $\Gamma$ is the wreath product of a Cayley color digraph $\Gamma_1$ on $\Z_7$ and a Cayley color digraph $\Gamma_2$ on $\Z_2^3$. Since $\fix_K({\cal C})$ is doubly-transitive on $C$, we have $\Aut(\Gamma_2)\cong \Sym(8)$. Therefore $K^{(2)}=\Aut(\Gamma_1)\wr\Aut(\Gamma_2)\cong \Aut(\Gamma_1)\wr\Sym(8)$.  By \cite[Corollary 6.8]{DobsonM2009} and Lemma \ref{CItool} $H_L$ and $\phi^{-1}H_L\phi$ are conjugate in $K^{(2)}$. We henceforth assume that $L = 1$, that is, $\fix_K({\cal C})$ acts faithfully on $C$, for each $C\in {\cal C}$.

Define an equivalence relation on $H$ by $h\equiv k$ if and only if $\Stab_{\fix_K({\cal C})}(h) = \Stab_{\fix_K({\cal C})}(k)$. The equivalence classes of $\equiv$ form a complete block system ${\cal D}$ for $K$. As $\fix_K({\cal C})\vert_C$ is primitive and not regular, each equivalence class of $\equiv$ contains at most one element from each block of ${\cal C}$.  We conclude that ${\cal D}$ either consists of $8$ blocks of size $7$ or each block is a singleton. Since we are assuming that $K$ has no block system with blocks of size $7$, we have that each block of ${\cal D}$ is a singleton.

Fix $C$ and $D$ in ${\cal C}$ with $C\neq D$ and $h\in C$. Now, $\Stab_{\fix_K({\cal C})}(h)$ is isomorphic to a subgroup of $\GL(3,2)$ and acts with no fixed points on $D$.  From~\cite[Appendix B]{DixonM1996}), we see that $\AGL(3,2)$ is the only doubly-transitive permutation group of degree $8$ whose point stabilizer admits a fixed-point-free action of degree $8$. Therefore $\fix_K({\cal C})\cong \AGL(3,2)$.   Additionally, $\Stab_{\fix_K({\cal C})}(h)\vert_D$ is transitive on $D$.

Suppose that $\Gamma$ is a color digraph with $K^{(2)} = \Aut(\Gamma)$ and suppose that there is an edge of color $\kappa$ from $h$ to $\ell\in E$, with $E\in{\cal C}$ and $E\neq D$.  Then $\Stab_{\fix_K({\cal C})}(h)\vert_E$ is transitive, and so there is an edge of color $\kappa$ from $h$ to every vertex of $E$.  As $\fix_K({\cal C})$ is transitive on both $C$ and $E$, we see that there is an edge of color $\kappa$ from every vertex of $C$ to every vertex of $D$.  We conclude that $\Gamma$ is a wreath product of two color digraphs $\Gamma_1$ and $\Gamma_2$, where $\Gamma_1$ is a Cayley color digraph on $\Z_7$ and $\Gamma_2$ is either complete or the complement of a complete graph, and $K^{(2)} = \Aut(\Gamma_1)\wr \Sym(8)$.  The result then follows by the same arguments as above.
\end{proof}

\begin{proof}[Proof of Corollary~B]
From Corollary~\ref{coro1} and Theorem~A, it suffices to show that $\Z_2^2\times \Z_7$ is a CI-group with respect to color ternary relational structures. As the transitive permutation groups of degree $28$ are readily available in GAP or Magma, it can be shown using a computer that $\Z_2^2\times \Z_7$ is a CI-group with respect to color ternary relational structures. (We note that a detailed analysis similar to the proof of Corollary~A for the group $\Z_2^3\times \Z_7$ also gives a proof of this theorem.)
\end{proof}

\section{Concluding remarks}
In the rest of this paper, we discuss the relevance of Theorem~A  to the study of CI-groups with respect to ternary relational structures.
Using the software packages~\cite{Magma} and~\cite{GAP}, we have determined
that $\Z_2^5$ is not a
 CI-group with respect to ternary relational structures.
Here we report an example witnessing this fact: the group $G$ has order $2048$, $V$ and $W$ are two {\em nonconjugate} elementary abelian regular subgroups of $G$, and $X=(\{1,\ldots,32\},E)$ is a ternary relational structure with $G=\Aut(X)$.
\begin{eqnarray*}
V&=&
\langle\mbox{\footnotesize{(1,2)(3,4)(5,6)(7,8)(9,10)(11,12)(13,14)(15,16)(17,18)(19,20)(21,22)(23,24)(25,26)(27,28)(29,30)(31,32)}},\\
&&\langle\mbox{\footnotesize{(1,3)(2,4)(5,7)(6,8)(9,11)(10,12)(13,
    15)(14,16)(17,19)(18,20)(21,23)(22,24)(25,27)(26,28)(29,31)(30,32)}},\\
&&\langle\mbox{\footnotesize{ (1,5)(2,6)(3,7)(4,8)(9,13)(10,14)(11,15)(12,16)(17,21)(18,22)(19,23)(20,24)(25,29)(26,30)(27,31)(28,32)}},\\
&&\langle\mbox{\footnotesize{ (1,9)(2,10)(3,11)(4,12)(5,13)(6,14)(7,15)(8,16)(17,25)(18,26)(19,27)(20,28)(21,29)(22,30)(23,31)(24,32)}},\\
&&\langle\mbox{\footnotesize{  (1,17)(2,18)(3,19)(4,20)(5,21)(6,22)(7,23)(8,24)(9,25)(10,26)(11,27)(12,28)(13,29)(14,30)(15,31)(16,32)}}\rangle,\\
W&=&\langle\mbox{\footnotesize{(1,2)(3,4)(5,6)(7,8)(9,10)(11,12)(13,14)(15,16)(17,18)(19,20)(21,22)(23,24)(25,26)(27,28)(29,30)(31,32)}},\\
&&\mbox{\footnotesize{ (1,3)(2,4)(5,7)(6,8)(9,11)(10,12)(13,
    15)(14,16)(17,20)(18,19)(21,24)(22,23)(25,28)(26,27)(29,32)(30,31)}},\\
&&\mbox{\footnotesize{ (1,5)(2,6)(3,7)(4,8)(9,14)(10,13)(11,16)(12,15)(17,22)(18,21)(19,24)(20,
    23)(25,29)(26,30)(27,31)(28,32)}},\\
&&\mbox{\footnotesize{ (1,9)(2,10)(3,11)(4,12)(5,14)(6,13)(7,16)(8,15)(17,27)(18,28)(19,25)(20,26)(21,32)(22,31)(23,30)(24,29)}},\\
&&\mbox{\footnotesize{  (1,17)(2,18)(3,20)(4,19)(5,22)(6,21)(7,23)(8,24)(9,27)(10,28)(11,26)(12,25)(13,32)(14,31)(15,29)(16,30)}}\rangle,\\
G&=&\langle V,W,\mbox{\footnotesize{(25,26)(27,28)(29,30)(31,32),(1,11)(2,12)(3,9)(4,10)(5,13)(6,
    14)(7,15)(8,16)(17,19)(18,20)(25,27)(26,28)}}\rangle,\\
E&=&\{g((1,3,9)),g((1,5,25)): g\in G\}.
\end{eqnarray*}

\begin{defin}{\rm
For a cyclic group $M=\langle g\rangle$ of order $m$ and a cyclic group $\langle z\rangle$ of order $2^d$, $d\geq 1$, we
denote by $D(m,2^d)$ the group $\langle z\rangle\ltimes M$ with $g^z=g^{-1}$.}
\end{defin}

Combining Theorem~A with \cite[Theorem 9]{Dobson2003}, \cite[Lemma 6]{Dobson2003}, the construction given in \cite{Spiga2008} and the previous
paragraph, we have the
following result which lists every group that can be a CI-group with respect
to ternary relational structures (although not every group on the list
needs to be a CI-group with respect to ternary relational structures).

\begin{thrm}\label{new}
If $G$ is a CI-group with respect to ternary relational structures,
then all Sylow subgroups of $G$ are of prime order or isomorphic to
$\Z_4$,  $\Z_2^d$, $1\le d\le 4$, or $Q_8$. Moreover, $G = U\times
V$, where $\gcd(\vert U\vert, \vert V\vert) = 1$, $U$ is cyclic of order
$n$, with $\gcd(n,\varphi(n)) = 1$, and $V$ is one of the following:
\begin{enumerate}
\item $\Z_2^d$, $1\le d\le 4$, $D(m,2)$, or $D(m,4)$, where $m$ is
odd and $\gcd(nm,\varphi(nm)) = 1$,
\item $\Z_4$, $Q_8$.
\end{enumerate}
\noindent Furthermore,
\begin{enumerate}
\item[(a)] if $V = \Z_4$, $Q_8$, or $D(m,4)$ and $p\mid
n$ is prime, then $4\mathrel{\not|}(p - 1)$,
\item[(b)] if $V = \Z_2^d$, $d\ge 2$, or $Q_8$, then $3\not\vert\ n$,
\item[(c)] if $V = \Z_2^d$, $d\ge 3$, then $7\not\vert\ n$,
\item[(d)] if $V = \Z_2^4$, then $5\not\vert\ n$.
\end{enumerate}
\end{thrm}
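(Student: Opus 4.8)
The plan is to treat this statement as a consolidation: the coarse skeleton comes from the classification already available in \cite{Dobson2003}, and the new non-CI examples produced in this paper merely sharpen it. First I would invoke \cite[Theorem~9]{Dobson2003} together with \cite[Lemma~6]{Dobson2003}, which supply everything in the statement except the sharp bound $d\le 4$ and the divisibility constraints (b), (c), (d): namely, that every CI-group $G$ with respect to ternary relational structures has all Sylow subgroups of prime order or isomorphic to $\Z_4$, $Q_8$, or elementary abelian $\Z_2^d$, and that $G=U\times V$ with $\gcd(\vert U\vert,\vert V\vert)=1$, $U$ cyclic of order $n$ with $\gcd(n,\varphi(n))=1$, and $V$ drawn from the indicated list. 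In particular the dihedral-type families $D(m,2)$, $D(m,4)$, the arithmetic condition $\gcd(nm,\varphi(nm))=1$, and constraint (a) are all inherited verbatim, since none of the new counterexamples has the shape relevant to (a) (which concerns $\Z_4$, $Q_8$, $D(m,4)$ paired with primes $p\equiv 1\ (\mathrm{mod}\ 4)$).

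The bridge between these abstract lists and the concrete examples is the standard fact that a subgroup of a CI-group with respect to ternary relational structures is again such a CI-group; hence no CI-group may contain a group that has been shown not to be one. With this in hand each remaining assertion is a one-line deduction, using that $\gcd(\vert U\vert,\vert V\vert)=1$ makes the relevant small groups genuine subgroups of $G=U\times V$. For the bound $d\le 4$: if a Sylow $2$-subgroup were $\Z_2^d$ with $d\ge 5$, then $\Z_2^5\le G$, contradicting the computation of the previous paragraph. For (b): if $V=\Z_2^d$ with $d\ge 2$ and $3\mid n$, then $\Z_3\times\Z_2^2\le G$, contradicting Corollary~\ref{coro1}; and if $V=Q_8$ with $3\mid n$, then $\Z_3\times Q_8\le G$, contradicting \cite{Spiga2008}. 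For (c): if $d\ge 3$ and $7\mid n$, then $\Z_7\times\Z_2^3\le G$, again contradicting Corollary~\ref{coro1}. For (d): if $d=4$ and $5\mid n$, then $\Z_5\times\Z_2^4\le G$, once more contradicting Corollary~\ref{coro1}. The non-CI status of $\Z_2^3\times\Z_3$ and $\Z_2^3\times\Z_7$ recorded in Theorem~A is consistent with, and indeed subsumed by, these minimal obstructions.

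The main obstacle is not any single computation but the careful matching of the refined statement against the exact wording of \cite[Theorem~9]{Dobson2003}: one must check that the product decomposition $G=U\times V$, the coprimality and $\gcd(nm,\varphi(nm))=1$ conditions, constraint (a), and the admissibility of $D(m,2)$ and $D(m,4)$ are transcribed without change, so that only the four refinements above require fresh justification. A secondary point requiring care is the heredity of the CI property used in the bridge step; should a black-box citation be unavailable, I would instead argue directly that each of the forbidden direct-factor subgroups $\Z_3\times\Z_2^2$, $\Z_7\times\Z_2^3$, $\Z_5\times\Z_2^4$, $\Z_3\times Q_8$, and $\Z_2^5$ must itself be a CI-group whenever $G$ is, by localizing any isomorphism of Cayley objects of the factor to the corresponding direct factor of $G$ permitted by $\gcd(\vert U\vert,\vert V\vert)=1$.
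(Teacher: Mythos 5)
Your proposal is correct and follows essentially the same route as the paper, which obtains Theorem~\ref{new} in exactly this way: by combining \cite[Theorem~9]{Dobson2003} and \cite[Lemma~6]{Dobson2003} with the new non-CI examples of Corollary~\ref{coro1}, the construction of \cite{Spiga2008} for $\Z_3\times Q_8$, and the computational fact that $\Z_2^5$ is not a CI-group with respect to ternary relational structures. Your explicit attention to the heredity of the CI property under passage to the relevant subgroups is a point the paper leaves implicit.
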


\providecommand{\bysame}{\leavevmode\hbox to3em{\hrulefill}\thinspace}
\providecommand{\MR}{\relax\ifhmode\unskip\space\fi MR }
\providecommand{\MRhref}[2]{%
  \href{http://www.ams.org/mathscinet-getitem?mr=#1}{#2}
}
\providecommand{\href}[2]{#2}


\begin{thebibliography}{10}

\bibitem{Babai1977}
L.~Babai, Isomorphism problem for a class of point-symmetric structures,
  {\it Acta Math. Acad. Sci. Hungar.} \textbf{29} (1977), no.~3-4, 329--336.

\bibitem{Magma}
W. Bosma, J. Cannon, and C. Playoust, The {M}agma algebra
  system. {I}. {T}he user language, {\it J. Symbolic Comput.} \textbf{24} (1997),
  no.~3-4, 235--265, Computational algebra and number theory (London, 1993).

\bibitem{Cameron1981}
P.~J. Cameron, Finite permutation groups and finite simple groups,
  {\it Bull. London Math. Soc.} \textbf{13} (1981), no.~1, 1--22.

\bibitem{DixonM1996}
J.~D. Dixon and B. Mortimer, \emph{Permutation groups}, Graduate Texts in
  Mathematics, vol. 163, Springer-Verlag, New York, 1996.

\bibitem{Dobson2003}
E. Dobson, On the {C}ayley isomorphism problem for ternary relational
  structures, {\it J. Combin. Theory Ser. A} \textbf{101} (2003), no.~2, 225--248.

\bibitem{Dobson2010a}
E. Dobson, The isomorphism problem for {C}ayley ternary relational
  structures for some abelian groups of order {$8p$}, {\it Discrete Math.} {\bf 310} (2010), 2895–2909.

\bibitem{DobsonM2009}
E. Dobson and J. Morris, Automorphism groups of wreath product
  digraphs, {\it Electron. J. Combin.} \textbf{16} (2009), no.~1, Research Paper 17,
  30 pgs.

\bibitem{GAP}
The~GAP Group, \emph{Gap -- groups, algorithms, and programming, version 4.4},
  (2005), (http://www.gap--system.org).

\bibitem{KovacsM2009}
I.~Kov{\'a}cs and M.~Muzychuk, The group {$\Bbb Z^2_p\times \Bbb Z_q$ is
  a {CI}-group},{\it  Comm. Algebra} \textbf{37} (2009), no.~10, 3500--3515.

\bibitem{Li2002}
C.H. Li, On isomorphisms of finite {C}ayley graphs---a survey,
  {\it Discrete Math.} \textbf{256} (2002), no.~1-2, 301--334.

\bibitem{Li2003}
C.H. Li, The finite primitive permutation groups containing an abelian
  regular subgroup, {\it Proc. London Math. Soc.} (3) \textbf{87} (2003), no.~3,
  725--747.

\bibitem{Palfy1987}
P.~P. P{\'a}lfy, Isomorphism problem for relational structures with a
  cyclic automorphism, {\it European J. Combin.} \textbf{8} (1987), no.~1, 35--43.

\bibitem{Spiga2008}
Pablo Spiga, On the {C}ayley isomorphism problem for a digraph with 24
  vertices, {\it Ars Math. Contemp.} \textbf{1} (2008), no.~1, 38--43. 

\bibitem{Wielandt1969}
H.~Wielandt, \emph{Permutation groups through invariant relations and invariant
  functions}, lectures given at The Ohio State University, Columbus, Ohio,
  1969.

\bibitem{Wielandt1964}
H. Wielandt, \emph{Finite permutation groups}, Translated from the German
  by R. Bercov, Academic Press, New York, 1964. 

\end{thebibliography}
\end{document}